\theoremstyle{plain}
\def\bdi{\begin{diagram}}
\def\edi{\end{diagram}}
\tikzset{main node/.style={circle,fill=black!20,draw,minimum size=1cm,inner sep=0pt},
}
\tikzstyle{vertex}=[circle, draw, fill=black, inner sep=0pt, minimum size=4pt]
\tikzstyle{edge}=[line width=1.5pt,black!50!white]
\tikzstyle{gridp}=[inner sep=1pt,circle,fill=black!70!white]
\tikzstyle{gridl}=[black!50!white]
\tikzstyle{lnode}=[circle,white,draw=black!60!white,fill=black!60!white,inner sep=1pt]
\tikzstyle{cnode}=[circle,draw=black!60!white,fill=black!60!white,inner sep=1.5pt]
\tikzstyle{redge}=[edge,Red]
\tikzstyle{bedge}=[edge,NavyBlue]
\tikzstyle{nvertex}=[vertex, draw=ncol, fill=ncol]
\tikzstyle{edgeq}=[edge,gray!60,densely dashed]
\tikzstyle{nedge}=[edge,ncol]
\tikzstyle{oedge}=[edge,Red!60!black]
\theoremstyle{plain}
\newtheorem{thm}{Theorem}[section]
\newtheorem{cor}[thm]{Corollary}
\newtheorem{lem}[thm]{Lemma}
\newtheorem{prop}[thm]{Proposition}
\theoremstyle{definition}
\newtheorem{defi}[thm]{Definition}
\newtheorem{defis}[thm]{Definitions}
\newtheorem{conj}[thm]{Conjecture}
\newtheorem{conv}[thm]{Convention}
\newtheorem{nota}[thm]{Notation}
\newtheorem{rem}[thm]{Remark}
\newtheorem{rems}[thm]{Remarks}
\newtheorem{exa}[thm]{Example}
\newtheorem{exas}[thm]{Examples}
\newtheorem{prob}[thm]{Problem}
\newtheorem{probs}[thm]{Problems}
\newtheorem{ques}[thm]{Question}
\newtheorem{sit}[thm]{}
\newcommand{\Spec}{ \operatorname{{\rm Spec}}}
\newcommand{\Frac}{ \operatorname{{\rm Frac}}}
\newcommand{\Div}{ \operatorname{{\rm Div}}}
\newcommand{\Aut}{ \operatorname{{\rm Aut}}}
\newcommand{\ML}{ \operatorname{{\rm ML}}}
\def\deg{\mathop{\rm deg}}
\def\ord{\mathop{\rm ord}}
\def\tp{\mathop{\rm tp}}
\def\ML{\mathop{\rm ML}}
\def\Pic{\mathop{\rm Pic}}
\def\card{\mathop{\rm card}}
\renewcommand{\epsilon}{\varepsilon}
\def\and{\quad\mbox{and}\quad}
\renewcommand{\div}{ \operatorname{\rm div}}
\newcommand{\C}{\ensuremath{\mathbb{C}}}
\newcommand{\Q}{\ensuremath{\mathbb{Q}}}
\newcommand{\Z}{\ensuremath{\mathbb{Z}}}
\newcommand{\N}{\ensuremath{\mathbb{N}}}
\newcommand{\G}{\ensuremath{\mathbb{G}}}
\newcommand{\hX}{{\hat X}}
\newcommand{\cB}{{\ensuremath{\mathcal{B}}}}
\newcommand{\cL}{{\ensuremath{\mathcal{L}}}}
\newcommand{\cF}{{\ensuremath{\mathcal{F}}}}
\newcommand{\cS}{{\ensuremath{\mathcal{S}}}}
\newcommand{\cO}{{\ensuremath{\mathcal{O}}}}
\newcommand{\cD}{{\ensuremath{\mathcal{D}}}}
\newcommand{\cP}{{\ensuremath{\mathcal{P}}}}
\newcommand{\cX}{{\ensuremath{\mathcal{X}}}}
\newcommand{\cY}{{\ensuremath{\mathcal{Y}}}}
\newcommand{\cU}{{\ensuremath{\mathcal{U}}}}
\newcommand{\p}{\partial}
\newcommand{\id}{{\rm id}}
\newcommand{\h}{{\rm ht}}
\renewcommand{\rho}{\varrho}
\def\bals#1\eals{\begin{align*}#1\end{align*}}
\def\bal#1\eal{\begin{align}#1\end{align}}
\def\SAut{\mathop{\rm SAut}}
\def\A{{\mathbb A}}
\def\ZZ{{\mathbb Z}}
\def\PP{{\mathbb P}}
\renewcommand{\phi}{\varphi}
\newcommand{\bnum}{\begin{enumerate}}
\newcommand{\enum}{\end{enumerate}}
\renewcommand{\emptyset}{\varnothing}
\newcommand{\la}{\label}
\newcommand{\brem}{\begin{rem}}
\newcommand{\brems}{\begin{rems}}
\newcommand{\erem}{\end{rem}}
\newcommand{\erems}{\end{rems}}
\newcommand{\bprob}{\begin{prob}}
\newcommand{\eprob}{\end{prob}}
\newcommand{\bprobs}{\begin{probs}}
\newcommand{\eprobs}{\end{probs}}
\newcommand{\bques}{\begin{ques}}
\newcommand{\eques}{\end{ques}}
\newcommand{\bexa}{\begin{exa}}
\newcommand{\bexas}{\begin{exas}}
\newcommand{\eexa}{\end{exa}}
\newcommand{\eexas}{\end{exas}}
\newcommand{\bdefi}{\begin{defi}}
\newcommand{\edefi}{\end{defi}}
\newcommand{\bdefis}{\begin{defis}}
\newcommand{\edefis}{\end{defis}}
\newcommand{\bcor}{\begin{cor}}
\newcommand{\ecor}{\end{cor}}
\newcommand{\blem}{\begin{lem}}
\newcommand{\elem}{\end{lem}}
\newcommand{\bconv}{\begin{conv}}
\newcommand{\econv}{\end{conv}}
\newcommand{\bconj}{\begin{conj}}
\newcommand{\econj}{\end{conj}}
\newcommand{\bprop}{\begin{prop}}
\newcommand{\eprop}{\end{prop}}
\newcommand{\bthm}{\begin{thm}}
\newcommand{\ethm}{\end{thm}}
\newcommand{\bnota}{\begin{nota}}
\newcommand{\enota}{\end{nota}}
\newcommand{\bsit}{\begin{sit}}
\newcommand{\esit}{\end{sit}}
\newcommand{\be}{\begin{equation}}
\newcommand{\ee}{\end{equation}}
\newcommand{\bproof}{\begin{proof}}
\newcommand{\eproof}{\end{proof}}
\def\ba{\begin{array}}
\def\ea{\end{array}}
\thanks{A part of this work
was done during a stay of the authors at the Max Planck Institut
f\"ur Mathematik at Bonn. The authors thank this
institution for its hospitality, support, and excellent working conditions.}
\begin{document}
\title[Cancellation for surfaces]{Cancellation for surfaces revisited. II}

\author{
H.\ Flenner, 
S.\ Kaliman,
M.\ Zaidenberg}
\address{
Fakult\"at f\"ur
Mathematik, Ruhr Universit\"at Bochum, Geb.\ NA 2/72,
Universit\"ats\-str.\ 150, 44780 Bochum, Germany}
\email{Hubert.Flenner@rub.de}
\address{Department of Mathematics,
University of Miami, Coral Gables, FL 33124, USA}
\email{kaliman@math.miami.edu}
\address{Universit\'e Grenoble Alpes, CNRS, Institut Fourier, F-38000 Grenoble, France
}
\email{Mikhail.Zaidenberg@ujf-grenoble.fr}

\begin{abstract} Let  $X$ and $X'$ be affine algebraic varieties over a field $\mathbb{k}$. The celebrated Zariski Cancellation Problem asks as to when the existence of an isomorphism $X\times\A^n\cong X'\times\A^n$  implies $X\cong X'$. In Part I of this paper (arXiv:1610.01805) we provided a criterion for cancellation 
in the case where $X$ is a normal affine surface admitting an $\A^1$-fibration $X\to B$ over a smooth affine curve $B$. If $X$ does not admit such an $\A^1$-fibration then the cancellation by the affine line is known to hold for $X$ by a result of Bandman and Makar-Limanov. 
In the present Part II we classify all pairs $(X,X')$ of smooth affine surfaces $\A^1$-fibered over $B$ with only reduced fibers whose cylinders $X\times\A^1$, $X'\times\A^1$ are isomorphic  over $B$. Our criterion of isomorphism of cylinders over $B$  is expressed in terms of linear equivalence of certain divisors on the  Danielewski-Fieseler quotient of $X$ over $B$. We construct a course moduli space of such surfaces with a given cylinder. This moduli space has an infinite number of irreducible components whose dimension grows. Each irreducible component of this moduli space 
is an affine variety with quotient singularities. 
\end{abstract}
\maketitle

\thanks{
{\renewcommand{\thefootnote}{} \footnotetext{ 2010
\textit{Mathematics Subject Classification:}
14R20,\,32M17.\mbox{\hspace{11pt}}\\{\it Key words}: cancellation, affine
varieties, group actions, one-parameter subgroups, transitivity.}}

{\footnotesize \tableofcontents}

\section*{Introduction}

Let $X$ and $Y$ 
be algebraic varieties  over a field $\mathbb{k}$. 
The biregular version of the celebrated Zariski Cancellation Problem asks
as to when the existence of a biregular isomorphism 
$X\times\A^n\cong Y\times\A^n$ implies that $X\cong Y$ where $\A^n$ stands for the affine $n$-space over $\mathbb{k}$. 

In the sequel the base field $\mathbb{k}$ is assumed to be algebraically closed of characteristic zero. 
We say that $X$ is a {\em Zariski factor} (a {\em Zariski $1$-factor}, respectively) if 
$X\times\A^n\cong Y\times\A^n$ implies that $X\cong Y$ whatever is $n\in\N$ (for $n=1$, respectively). 

This paper is Part II of \cite{FKZ-I}.  
In \cite{FKZ-I} (referred to as Part I) we obtained the following criterion.

\bthm[Thm.\ I.0.3 \footnote{A reference  "Theorem I.x.y" means "Theorem x.y in \cite{FKZ-I}".}\label{thm: Zariski $1$-factor}] Let $X$ be a normal affine  surface $\A^1$-fibered over a smooth affine curve. 
Then the following conditions are equivalent:
\begin{itemize}
\item[{\rm (i)}] $X$ is a Zariski factor;
\item[{\rm (ii)}]  $X$ is a Zariski $1$-factor; 
\item[{\rm (iii)}] $X$ is a parabolic $\G_m$-surface. 
\end{itemize}
\ethm

Recall (see e.g., \cite{FZ}) that a {\em parabolic $\mathbb{G}_m$-surface} is a normal affine surface equipped with an $\A^1$-fibration $\pi\colon X\to C$ over a smooth affine curve $C$ and an effective $\G_m$-action along the fibers of $\pi$. 

From Theorem \ref{thm: Zariski $1$-factor} and a result of  Bandman and Makar-Limanov (\cite[Lem.\ 2]{BML3}) one deduces such a corollary.

\bcor[Cor.\ I.0.4]\label{cor: characterization-1-factors} A normal affine 
surface  $X$ is a Zariski $1$-factor if and only if either $X$ does not admit any  $\A^1$-fibration over an affine base
or $X$ is a  parabolic $\G_m$-surface. 
\ecor

The present Part II deals with normal $\A^1$-fibered affine surfaces $\pi\colon X\to B$ over a smooth affine curve $B$. If 
$\pi$ has only reduced fibers we call such a surface a \emph{generalized Danielewski-Fieseler surface}, or a \emph{GDF surface} for short. To a GDF  surface $\pi\colon X\to B$ one associates a non-separated one-dimensional scheme ${\rm DF}(\pi)$ called the \emph{Danielewski-Fieseler quotient} along with a surjective morphism ${\rm DF}(\pi)\to B$ and an anti-effective divisor $\tp.\div(\pi)$ on ${\rm DF}(\pi)$ called the \emph{type divisor} (see Definitions \ref{def:DF-quotient} and \ref{def: type}). Our main result is the following theorem.

\bthm\la{thm: main0} Let $\pi\colon X\to B$ and $\pi'\colon X'\to B$ be  GDF surfaces over a smooth affine  curve $B$. Then the cylinders $X\times\A^1$ and $X'\times\A^1$ are isomorphic over $B$ if and only if there exists an isomorphism $\tau\colon{\rm DF}(\pi)\stackrel{\cong_B}{\longrightarrow}{\rm DF}(\pi')$ defined over $B$ such that the divisors $\tp.\div(\pi)$ and $\tau^*(\tp.\div(\pi'))$ on ${\rm DF}(\pi)$ are linearly equivalent.
\ethm

The next corollary follows immediately by using a suitable base change.

\bcor\la{cor: main0} An isomorphism $\varphi\colon X\times\A^1\stackrel{\cong}{\longrightarrow} X'\times\A^1$ which sends the fibers of $X\times\A^1\to B$ to fibers of $X'\times\A^1\to B$ does exist if and only if there exists an isomorphism $\tau\colon{\rm DF}(\pi)\stackrel{\cong}{\longrightarrow}{\rm DF}(\pi')$ such that $\tp.\div(\pi)\sim\tau^*(\tp.\div(\pi'))$.
\ecor

\brems\la{rem: main0} 1. Notice that if $B\not\cong\A^1$ then any isomorphism of cylinders $X\times\A^1\stackrel{\cong}{\longrightarrow} X'\times\A^1$ sends  the fibers to fibers inducing an automorphism of $B$, cf.\ Lemma I.6.10.  

2. It is worth to mention also the following facts. Consider a pair $(\breve{B}\to B,\breve{D})$ where $\breve{D}$ is an anti-effective divisor on 
a  one-dimensional scheme $\breve{B}$ equipped with a  surjective morphism $\breve{B}\to B$. Then  there exists a GDF surface  $\pi\colon X\to B$ such that ${\rm DF}(\pi)=_B\breve{B}$ and $\tp.\div(\pi)=\breve{D}$. Given a pair $(\breve{B}\to B,\breve{D})$ the corresponding GDF surfaces $X$ can vary in non-isotrivial families. However, due to Theorem \ref{thm: main0} the cylinders over these surfaces are all isomorphic over $B$. Moreover, up to an isomorphism over $B$ these cylinders depend only on the class of $\breve{D}$ in the Picard group $\Pic(\breve{B})$. The variation of $\breve{D}$ within its class adds, in general, extra discrete parameters to the isomorphism type of the corresponding GDF surface $X$, see, e.g., Lemma I.7.11 and Corollary I.7.12. 
\erems

The proof of Theorem \ref{thm: main0} is done in Sections \ref{sec:bushes}--\ref{sec:proof}.  
In Section \ref{sec:prelim} we remind some terminology and necessary facts from Part I. In Section \ref{sec: reduction-dual-graph} we provide a new proof of a result of Bandman and Makar-Limanov  which gives a sufficient condition for almost flexibility of the cylinder over an $\A^1$-fibered affine surface, see Theorem \ref{cor: Giz-cylinder}. In the concluding Section \ref{sec:deformations} we construct, under a certain restriction, a coarse moduli space of marked GDF surfaces with the given base and  graph divisor, see Theorem \ref{thm:moduli}. The cylinders over these surfaces are all isomorphic. A simple Example \ref{ex:non-algebraic} shows that without our restriction, the coarse moduli space of such surfaces does not exist, in general. The irreducible components of the moduli space of GDF surfaces with a given cylinder have unbounded dimensions, in general. 

\section{Preliminaries}\la{sec:prelim}
We recall  some definitions and results from Part I that are used in the sequel.

\bdefi[\emph{GDF surface}]\label{def: affine A1} Let $X$ 
be a normal affine surface over $\mathbb{k}$. A morphism
$\pi\colon  X\to B$  onto a smooth affine curve $B$ is called an {\em  $\A^1$-fibration} 
if the fiber $\pi^*(b)$ over a general point $b\in B$ is isomorphic to the affine line $\A^1$. Each fiber of an $\A^1$-fibration is a disjoint union of curves isomorphic to $\A^1$, see \cite[Ch.\ 3, Lem.\ 1.4.1]{Mi}. 

An $\A^1$-fibered surface $\pi\colon X\to B$ is called a 
{\em generalized Danielewski-Fieseler surface}, or a {\em GDF surface} for short, 
if the fibers $\pi^*(b)$, $b\in B$, 
are reduced. Any GDF surface is smooth (see, e.g.,  \cite[(2.2)]{Du0} and Lemma I.2.18(b)).
\edefi

\bexa\label{line-bdls} If all the fibers of  a GDF surface $\pi\colon  X\to B$ are irreducible then $\pi$ is the projection of a locally trivial $\A^1$-fiber bundle. Since the base $B$ is affine this fiber bundle admits regular sections.
Given such a section $Z$ there is a unique structure of a line bundle, say, $L$ on $\pi\colon  X\to B$ with zero section $Z$. Let  $Z'\neq Z$ be a second regular section of $L$, and let $L'$ be the line bundle over $B$ with projection $\pi$ and zero section $Z'$. Since the effective zero divisor $\div_Z(Z')=\div_{Z'}(Z)$ belongs to the classes of  both $L$ and of $L'$ in $\Pic B$, these classes coincide, that is,  $L\cong_B L'$. 

 Notice that any class in $\Pic B$ contains effective and anti-effective divisors. Via the above procedure any such class can be represented by an $\A^1$-fiber bundle $X\to B$, that is, by a GDF surface with only irreducible fibers. 
\eexa

\bdefi[\emph{Trivializing sequence}]\label{def: trivializing-seq}  Given a GDF surface $\pi\colon X\to B$ there is a sequence of GDF surfaces $\pi_i\colon X_i\to B$ and a sequence
{\rm (I.9)} of birational $B$-morphisms
\be\la{eq: trivializing-seq-aff-modif} 
X =  X_m \stackrel{\varrho_m}{\longrightarrow} X_{m-1} \stackrel{}{\longrightarrow}  
\ldots \stackrel{}{\longrightarrow}   X_1 \stackrel{\varrho_1}{\longrightarrow} 
X_0=B\times\A^1\,
\ee where $\rho_i\colon X_i\to X_{i-1}$ is  a \emph{fibered modification}, that is, an affine  modification over $B$ with a reduced zero-dimensional center. This sequence can be chosen so that
 the center of $\rho_i$ is contained in the exceptional divisor of $\rho_{i-1}$ for every $i\ge 2$.
Let $\bar B$ be a smooth completion of $B$.
There exist a sequence of ruled surfaces $\bar\pi_i\colon\bar X_i\to \bar B$ and a sequence of birational morphisms over $\bar B$ (see
{\rm (I.10)})
\be\la{eq: trivializing-seq-blowups} 
\bar X =  \bar X_m \stackrel{\bar\varrho_m}{\longrightarrow} \bar X_{m-1} \stackrel{}{\longrightarrow}  
\ldots \stackrel{}{\longrightarrow}   \bar X_1 \stackrel{\bar\varrho_1}{\longrightarrow} 
\bar X_0=\bar B\times\PP^1\,
\ee
which extends \eqref{eq: trivializing-seq-aff-modif}. In particular, 
$\bar X_i$ is a
smooth completion of $X_i$  by an SNC divisor $D_i\subset \bar X_i$
such that
\begin{itemize}\item there is a unique component $S_i$ of  $D_i$ which is a section of $\bar\pi_i\colon\bar X_i\to \bar B$; the remaining components of $D$ are fiber components;
\item $\bar\rho_1\colon\bar X_1\to \bar X_0$ is a blowup with center a reduced zero dimensional subscheme in $\bar X_0\setminus S_0$ supported in the fibers over some points $b_1,\ldots,b_n\in B$, $n\ge 0$; 
\item for $i>1$, $\bar\rho_i\colon\bar X_i\to\bar X_{i-1}$ is the blowup of a reduced zero dimensional subscheme of $X_{i-1}$ supported on the exceptional divisor of $\bar\rho_{i-1}$. 
\end{itemize}
\edefi

\bconv\label{conv0} Given a GDF surface $\pi\colon X\to B$, the trivializing sequences \eqref{eq: trivializing-seq-aff-modif} and \eqref{eq: trivializing-seq-blowups} are not unique, in general. We fix an arbitrary such pair of sequences. In particular, we fix a completion $\bar\pi\colon\bar X\to\bar B$ of $\pi\colon X\to B$. By abuse of notation, we attribute to $\pi$ the objects  depending a priori on $\bar\pi$. \econv

\bdefi[\emph{Markings and special fibers}]\la{def:marking} A \emph{marked GDF surface} is a GDF surface $\pi\colon X\to B$ with a given \emph{marking} $z\in\cO(B)\setminus\{0\}$ where $z$ has only simple zeros and vanishes at the points $b_1,\ldots,b_n$ as in \ref{def: trivializing-seq}. 
It is convenient to enlarge the set $\{b_1,\ldots,b_n\}$ including all the zeros of $z$, so that $b_1+\ldots+b_n=z^*(0)$ is a principal reduced effective divisor on $B$. Notice that the projection $\pi\colon X\to B$ restricted over the Zariski open set $B^*:=B\setminus\{b_1,\ldots,b_n\}$ defines a trivial line bundle, a trivialization being given by the composition $\sigma:=\rho_m\circ\ldots\circ\rho_1:X\to B\times\A^1$ of morphisms in \eqref{eq: trivializing-seq-aff-modif} restricted over $B^*$.
The fibers $\pi^{-1}(b_i)$, $i=1,\ldots,n$ are called \emph{special}.
\edefi

\bdefi[\emph{Fiber trees, levels, and types}]\label{def: fiber tree}  Given a marked GDF surface $X\to B$, its SNC 
completion $\bar\pi\colon\bar X\to\bar B$, and a point $b\in B$,  the dual graph  $\Gamma_b=\Gamma_b(\bar\pi)$ of the fiber $\bar\pi^{-1}(b)$ is a rooted tree called a  \emph{fiber tree}.  It depends on the completion chosen. 
The  root $v_0\in\Gamma_b$ is the  neighbor of the section $S\subset \bar X\setminus X$ in the  dual graph $\Gamma_{\rm ext}$ of the \emph{extended divisor} $$D_{\rm ext}=S+\bar\pi^{-1}(\bar B\setminus B)+\sum_{j=1}^n \bar\pi^{-1}(b_j)\,$$ where $b_1,\ldots,b_n\in B$ are as in Definition \ref{def:marking}. A vertex $v$ of $\Gamma_b$ has \emph{level} $l$ if the tree distance between $v$ and $v_0$ equals $l$. Thus,  $v_0$ is a unique vertex of $\Gamma_b$ on level $0$.
By a \emph{height} $\h(\Gamma_b)$ we mean the highest level of the vertices in $\Gamma_b$. The \emph{leaves} of a rooted tree are its extremal vertices different from the root. A fiber component of $\bar\pi^{-1}(b_j)$ which corresponds to a leave is called a \emph{feather}. In fact, the feathers are the closures in $\bar X$ of the fiber components of $\pi^{-1}(b_j)\subset X$, $j=1,\ldots,n$. Thus, $D_{\rm ext} - D$ is a union of the feathers of $D_{\rm ext}$. By the \emph{type} $\tp(\Gamma_b)$ we mean the sequence of non-negative integers $(n_0,n_1,\ldots, n_h)$ where $h=\h(\Gamma_b)$ and  $n_i$ is the number of leaves of $\Gamma_b$ on level $i$. 
For instance, if $\h(\Gamma_b)>0$  and $\tp(\Gamma_b)=(0,\ldots,0,1)$ then $\Gamma_b$ is a chain $[[-1,-2,\ldots,
-2,-1]]$.\edefi

\brem\label{well-ordered-seq}
The morphism $\bar\rho_i\colon \bar X_l\to \bar X_{l-1}$ in \eqref{eq: trivializing-seq-blowups} contracts the  fiber components of $D_{l,\rm ext}$ on the top level $l$ into distinct points on a union of fiber components of $D_{l-1,\rm ext}$ of top level $(l-1)$. These points  are smooth points of $D_{l-1,\rm ext}$. 
\erem

\bdefi[\emph{Graph divisors}] A \emph{graph divisor} on $B$ is a formal sum $\cD=\sum_{i=1}^n \Gamma_i b_i$ where $b_1,\ldots,b_n\in B$ are distinct points and $\Gamma_i$, $i=1,\ldots,n$, are rooted trees. 

We say that $\cD$ is a  \emph{chain divisor} if all the $\Gamma_i$ are linear graphs. Let $\pi\colon X\to B$ be a marked GDF surface with a marking $z\in\cO_B(B)$ such that $z^*(0)=b_1+\ldots+b_n$. Then $\pi$ is the projection of a line bundle over $B$ if and only if the graph divisor $\cD(\pi):=\sum_{i=1}^n \Gamma_i(\pi) b_i$ is a chain divisor.

Notice that under the conventions of Definition \ref{def: trivializing-seq}  for every $l=0,\ldots,m$ in  \eqref{eq: trivializing-seq-aff-modif} one has $\cD(\pi_l)=\cD(\pi_m)_{\le l}$ and $\h(\cD(\pi_l))=l$ (cf.\ Remark \ref{well-ordered-seq}.1). 
\edefi

\bdefi[\emph{The Danielewski-Fieseler quotient}]\la{def:DF-quotient} Given a GDF surface $\pi\colon X\to B$ the \emph{Danielewski-Fieseler quotient} ${\rm DF}(\pi)$ is the quotient of $X$ by the equivalence relation defined by the fiber components of $\pi$. Thus, ${\rm DF}(\pi)$ is a (non-separated, in general) one-dimensional  scheme, and $\pi$ factorizes as follows: $$\pi\colon X\stackrel{p}{\longrightarrow} {\rm DF}(\pi)\stackrel{q}{\longrightarrow} B\,$$ where the fibers of $p\colon X\to {\rm DF}(\pi)$  are reduced and irreducible.  
In particular, $q\colon {\rm DF}(\pi)\to B$ is an isomorphism over $B\setminus \{b_1,\ldots,b_n\}$, while the total transform of $b_i$  in ${\rm DF}(\pi)$ consists of $N_i$ points $(b_{i,j})_{j=1,\ldots,N_i}$ where $N_i$ is the number of fiber components 
$F_{i,j}$ in $\pi^{-1}(b_i)$. Therefore, $q$  is an isomorphism if and only if all the fibers of $\pi$ are irreducible, if and only if $\pi\colon X\to B$ admits a structure of a line bundle. 
\edefi

\bdefi[\emph{Type divisors}]\la{def: type}
Let $l_{i,j}=l(F_{i,j})$ 
be the level of $F_{i,j}$. The anti-effective divisor   on ${\rm DF}(\pi)$ $$\tp.\div(\pi)=-\sum_{i,j} l_{i,j}b_{i,j}$$ is called the \emph{type divisor} of $X$. 

The linear equivalence of divisors on ${\rm DF}(\pi)$ is, as usual, the  equivalence modulo the principal divisors on ${\rm DF}(\pi)$. The latter divisors  are just the principal divisors on $B$ lifted to ${\rm DF}(\pi)$. The Picard group $\Pic {\rm DF}(\pi)$ is defined in a usual way. To a GDF surface $\pi\colon X\to B$ we associate its Picard class $[\tp.\div(\pi)]\in\Pic {\rm DF}(\pi)$. If $\pi\colon X\to B$ represents a line bundle $L$ over $B$ then ${\rm DF}(\pi)=B$ and $[\tp.\div(\pi)]=[L]\in\Pic B$.  
\edefi

\brems\la{rem:Pic} 1. It is easily seen that $$\Pic {\rm DF}(\pi)\cong (\Pic B) \oplus \ZZ^{\rho(\pi)}\quad\mbox{where}\quad \rho(\pi)=\sum_{j=1}^n (N_j-1)\,.$$ 

2. Let $\pi\colon X\to B$ be a GDF surface. It is well known that $p\colon X\to {\rm DF}(\pi)$ is an $\A^1$-fiber bundle (see, e.g., Proposition I.3.3). If ${\rm DF}(\pi)$ is non-separated then the latter fiber bundle does not admit any regular section. Indeed, the image of such a section would be a non-separated reduced proper subscheme of $X$. However, the latter is not possible since $X$ is separated. By a similar reason, given a line bundle $L$ over ${\rm DF}(\pi)$,  the total space of $L$ is affine if and only if ${\rm DF}(\pi)$ is separated, that is, ${\rm DF}(\pi)=B$. 

3. Given a trivializing sequence \eqref{eq: trivializing-seq-aff-modif} and  a regular section $s_0\colon B\to X_0=B\times\A^1$ the proper transform $s_m$ of $s_0$ in $X=X_m$ acquires a pole of order $l_{i,j}$ over a point $b_{i,j}\in {\rm DB}(X)$.  The latter  means the following. Choose the standard affine chart $U_{i,j}\cong_{B_i} B_i\times\A^1$ near $F_{i,j}$ with natural local coordinates $(z,u_{i,j})$ where $z$ gives a local parameter on $(B,b_i)$ and $B_i=(B\setminus\{b_1,\ldots,b_n\})\cup\{b_i\}$. Then  at the point $b_i$ the meromorphic function $z^{l_{i,j}}u_{i,j}\circ s_m$ is regular and does not vanish. 
Therefore, $-\div_\infty (s_m)\sim \tp.\div(\pi)$ where the pole divisor $\div_\infty (s_m)$ on $ {\rm DF}(X)$ is defined above. The proof goes by induction on the length $l_{i,j}$ of the shortest path in the fiber tree $\Gamma_{b_i}(\pi)$ joining the leaf $\bar F_{i,j}$ with the root. Each subsequent blowup in \eqref{eq: trivializing-seq-blowups} along this path increases the pole order by one. 
\erems

\bdefi[\emph{$(A,\overline{-1})$-stretching}; cf.\ Definitions I.7.1 and I.7.2]
\la{def: stretching} 
Given an effective divisor $A=\sum_{i=1}^n a_ib_i\in\Div (B)$ where $a_i\in\Z_{\ge 0}$ and $b_i\in B$ we  associate with $A$ a chain divisor $\cD(A)=\sum_{i=1}^n L(a_i)b_i$ where $L(a_i)$ is a chain with weights $[[-1,-2,\ldots,-2,-2]]$ of length $a_i$ if $a_i>0$ and $L(0)=\emptyset$ otherwise. Let $\cD=\sum_{i=1}^n \Gamma_i b_i$ be a  graph divisor.
We let $\cD'=\sum_{i=1}^n \Gamma'_i b_i$ where 
$\Gamma'_{i}$ is obtained from $\Gamma_{i}$ by inserting  the chain $L_i$ below the root $v_i$ of $\Gamma_{i}$
so that the $(-1)$-vertex of $L_i$ becomes the root of $\Gamma'_{i}$ and the right end vertex of $L_i$ 
is joint with $v_{i}$ whose weight changes accordingly.
The transformation $\cD\leadsto\cD'$ is called a \emph{combinatorial $(A,\overline{-1})$-stretching}. A stretching is called \emph{principal} if $A\in\Div(B)$ is a principal divisor.

Given a GDF surface $\pi\colon X\to B$ the \emph{principal geometric  $(A,\overline{-1})$-stretching} with $A=\div f$ for $f\in\cO_B(B)\setminus\{0\}$ amounts to perform in (\ref{eq: trivializing-seq-aff-modif}) the affine modification $X_0'\to X_0=B\times\A^1$ along the divisor $(f\circ\pi)^{*}(0)$ on $X_0$ with center being the ideal $(f,u)\subset\cO_{X_0}(X_0)=\cO_B(B)[u]$. In other words, letting $\A^1=\Spec \mathbb{k}[u]$ one has  
\be\la{eq:-1-stretching} \cO_{X_0'}(X_0')=\cO_B(B)[u']\quad\mbox{where}\quad u'=u/f\,.\ee 
Therefore, $X_0'\cong_B X_0$. Performing the remaining fibered modifications in (\ref{eq: trivializing-seq-aff-modif}) and reorganizing the sequence accordingly gives again the same surface $X'=X_{m'}'=X_m=X$ along with a new trivializing sequence \eqref{eq: trivializing-seq-aff-modif}. Thus, the principal $(A,\overline{-1})$-stretching preserves the initial GDF surface. However, it affects the trivializing completion $(\bar X, D)$ along with sequence  (\ref{eq: trivializing-seq-blowups}) by inserting the chains $L_i$ in the graph divisor between the section $S$ and the roots as described before.  Abusing notation we let $\pi'=\pi_{X'}\colon X'\to B$ (in fact, $\pi'=\pi$).
One has
\be\la{eq:type-div-transform} {\rm tp.div}\,(\pi')={\rm tp.div}\,(\pi)-\div f\,.
\ee
In other words, the type divisor of a GDF surface is defined only up to adding a principal anti-effective divisor.
\edefi

The latter observation leads to the following lemma. 

\blem\la{lem: reduction} Consider two GDF surfaces
$\pi_X\colon X\to B$ and $\pi_Y\colon Y\to B$ with the same Danielewski-Fieseler quotient and with linearly equivalent  type divisors, see Definition {\rm \ref{def: type}}. Then one can choose new trivializing sequences \eqref{eq: trivializing-seq-aff-modif} for $X$ and $Y$ in such a way that the corresponding type divisors coincide. 
\elem

\bproof By assumption there is a principal divisor $T\in\Div B$ such that
\be\la{eq:l.e.-type-div} \tp.\div(\pi_X)= \tp.\div(\pi_Y)+T\,.\ee
Let $A\in\Div B$ be a principal effective divisor such that $A-T\ge 0$. Performing the principal $(A,\overline{-1})$-stretching and $(A-T,\overline{-1})$-stretching, respectively, one  replaces the trivializing sequences \eqref{eq: trivializing-seq-aff-modif} for $X$ and $Y$ by suitable new ones so that the new type divisors for $X$ and $Y$ are 
$$\tp.\div(\pi_X)-A,\quad\mbox{resp.,}\quad\tp.\div(\pi_Y)-(A-T)\,,$$ 
see \eqref{eq:type-div-transform}. Due to \eqref{eq:l.e.-type-div} the latter divisors are equal. 
\eproof

\brem\la{rem: 9.1a} Let $\pi\colon  X\to B$ be a marked GDF 
surface  with a marking $z\in\cO_B(B)\setminus\{0\}$ where $z^*(0)=b_1+\ldots+b_n$ is a reduced effective divisor. Performing a principal $(A,\overline{-1})$-stretching with the divisor $A=\div z^d=d(b_1+\ldots+b_n)$ one may suppose that the graph divisor $\cD(\pi)$ does not have any leaf on level $\le d-1$; cf.\ Lemma I.7.3.
\erem

\section{Classification of GDF cylinders up to $B$-isomorphism}
\label{sec: reduction-dual-graph}
Hereafter $B$ stands for a smooth affine curve.  Theorem \ref{thm: main0} can be reformulated as follows. 

\bthm\la{thm: main}
Let $\pi_X\colon X\to B$ and $\pi_Y\colon Y\to B$ be two GDF surfaces over the same base $B$.
Then the cylinders $\cX=X\times\A^1$ and $\cY=Y\times\A^1$ are isomorphic over $B$ if and only if there exists an isomorphism $\tau\colon {\rm DF}(\pi_X)\stackrel{\cong_B}{\longrightarrow}{\rm DF}(\pi_Y)$ such that $[\tp.\div(\pi_X)]=[\tau^*(\tp.\div(\pi_Y))]$
in the Picard group $\Pic {\rm DF}(\pi_X)$.
\ethm

The proof starts with the following elementary lemma.

\blem\la{lem:elem} An isomorphism $\varphi\colon\cX\stackrel{\cong_B}{\longrightarrow}\cY$ induces an isomorphism $\tau\colon {\rm DF}(\pi_X)\stackrel{\cong_B}{\longrightarrow}{\rm DF}(\pi_Y)$. 
\elem

\bproof The Danielewski-Fieseler quotient ${\rm DF}(\pi_X)$ is the quotient of $X$ by the equivalence relation defined by the fiber components of $\pi_X$. It 
coincides with the quotient of the cylinder $\cX$ by the equivalence relation defined by the fiber components of $\cX\to B$. The isomorphism $\varphi\colon\cX\stackrel{\cong_B}{\longrightarrow}\cY$ respects the latter equivalence relations on $\cX$ and $\cY$.
\eproof

This lemma along with the linear equivalence of the type divisors established in Subsection \ref{sec:proof-only-if} gives the `only if' part of Theorem \ref{thm: main}.

The strategy of the proof of the `if' part is as follows. We reduce the assertion to the case where the type divisor completely determines the graph divisor. This is so indeed if the fiber trees are bushes, see Definition \ref{ex: unibranched}. First we obtain this reduction assuming that the GDF surface $\pi_X\colon X\to B$ has a unique special fiber $\pi_X^{-1}(b_0)$, $b_0\in B$, see Theorem \ref{thm: 8.main}. In Subsection \ref{ss:regularization} we treat the general case of GDF surfaces  with any number of special fibers.  
 
Theorem I.5.7 of Part I is crucial for the proof. This theorem says that certain continuous parameters of a GDF surface $\pi_X\colon X\to B$ are irrelevant for the $B$-isomorphism class of the cylinder $\cX=X\times\A^1$. This allows to replace the initial GDF surface $\pi_X\colon X\to B$ by a suitable new one $\pi_{X'}\colon X'\to B$ carrying  the same graph divisor $\cD(\pi_{X'})=\cD(\pi_X)$ and admitting a quite simple explicit description. 

Let us indicate some corollaries of Theorem \ref{thm: main}. First of all, the GDF surfaces over a given smooth affine curve $B$ which are not Zariski 1-factors and whose cylinders are isomorphic over $B$ to a given one form a countable number of families with affine bases. The dimensions of the bases are unbounded. This follows also from Theorem I.5.7 and Proposition I.7.8. A similar conclusion holds as well for the collection of all normal affine surfaces $\A^1$-fibered over a given smooth affine curve and having a given cylinder.

The following result confirms  Conjecture I.1.1
of Part I under a certain additional assumption. It is due to Bandman and Makar-Limanov (\cite[Thm.\ 1]{BML3}).

\bthm[{\rm Bandman-Makar-Limanov}]\label{cor: Giz-cylinder} Let $\pi_X\colon X\to \A^1$ be a Danielewski-Fieseler surface, that is, a GDF surface over $B=\Spec\mathbb{k}[z]$ with the unique special fiber $z^*(0)$. Then  there is an isomorphism of cylinders $\cX\cong_B\cY$ where $\cY=Y\times\A^1$  is  the cylinder over a Gizatullin GDF surface $\pi_Y\colon Y\to\A^1$.  Hence the group $\SAut\cX$ acts in $\cX$ with a 
Zariski open orbit $O$ such that ${\rm codim}_X(X\setminus O)\ge 2$. 
\ethm

An elegant direct proof of this theorem in \cite{BML3} exploits the Danielewski construction. We provide here an alternative argument based on Theorem \ref{thm: main}.

\bproof  The assertion is trivial if $X\cong\A^2$. To exclude this case we will assume that the fiber $\pi_X^{-1}(0)$ with the fiber tree $\Gamma_0=\Gamma_0(\pi_X)$  is reducible. Letting $\tp(\Gamma_0)=(n_i)_{i=0,\ldots,h}$ by our assumption $n_0=0$ and $h={\rm ht}(\Gamma_0)>0$. 

By Theorem \ref{thm: main} it suffices to find a Gizatullin GDF surface $\pi_Y\colon Y\to\A^1$ with a unique special fiber $\pi_Y^{-1}(0)$ such that $\tp(\Gamma_0(\pi_Y))=\tp(\Gamma_0)$.
 Consider a chain $\gamma_0=[v_0,\ldots,v_{h-1}]$ of length $h-1$. Let  $\Gamma_0'$ be the rooted tree with  $v_0$ for the root, $\gamma_0$ for the trunk, and with the leaves $v_{i+1,j}$, $j=1,\ldots,n_{i+1}$ on level $i+1$,  $i=0,\ldots,h-1$,  joint with $\gamma_{0}$ by the edges $[v_{i},v_{i+1,j}]$. 
So,  $\Gamma_0'$ has exactly $n_{i}$ leaves $v_{i,j}$ on level $i$. Therefore,  $\tp(\Gamma_0')=\tp(\Gamma_0)$.

Let $\pi_Y\colon Y\to\A^1$ be a Danielewski-Fieseler surface such that $\Gamma_0(\pi_Y)=\Gamma_0'$. By Theorem \ref{thm: main} one has $\mathcal{X}\cong_B\mathcal{Y}$. Clearly, $Y\not\cong\A^1\times\A^1_*$ is a Gizatullin surface. Indeed,  there exists an SNC completion $(\bar Y, D)$ of $Y$ with the linear dual graph $\Gamma(D)=[F_\infty, S, v_0,\ldots,v_{h-1}]$ obtained by attaching to $\gamma_0$ the $[[0,0]]$-chain $[F_\infty, S]$. Thus, $Y$ is a Gizatullin surface. 

The group $\SAut Y$ acts on $Y$ with a Zariski  open orbit whose complement is finite (\cite{Gi}). Hence also $\SAut \cY\supset \SAut Y\times \SAut \A^1$ has a Zariski open orbit in $\cY\cong\cX$ with a complement of codimension at least 2. In particular, $\ML(\cX)\cong \ML(\cY)$ is trivial. 
 \eproof

\section{GDF surfaces whose fiber trees are bushes} \label{sec:bushes} 
In this section we assume for simplicity that $B$ is the affine  line $\A^1$ and $\pi\colon X\to\A^1$ has a unique special fiber $\pi^{-1}(0)$. Later on we will indicate the modifications which allow to treat the general case. 

\bdefi[\emph{Bushes}]\label{ex: unibranched}  A  \emph{bush} is a rooted tree $\Gamma$
such that the branches of $\Gamma$ at the root $v_0$ are chains, see Figure \ref{fig:bush}. If $\tp(\Gamma)=(n_i)_{i\ge 0}$, see \ref{def: fiber tree},  then $\Gamma$ has exactly $n_i$ branches of height $i$, the root $v_0$ of $\Gamma$ being the  common tip of each branch. 
\edefi

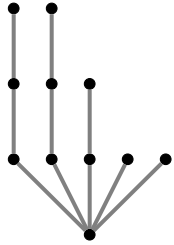
\begin{figure}[h]
\begin{tabular}{l@{\hspace{70pt}}l@{\hspace{70pt}}l}
    \pgfdeclarelayer{background}
		\pgfsetlayers{background,main}
    \begin{tikzpicture}[scale=2]

 \node[vertex] (a1) at (0.5,0) {};
      \node[vertex] (b11) at (0,0.5) {};
      \node[vertex] (b21) at (0.25,0.5) {};
      \node[vertex] (b31) at (0.5,0.5) {};
      \node[vertex] (b41) at (0.75,0.5) {};
      \node[vertex] (b51) at (1,0.5) {};
       \node[vertex] (c11) at (0,1) {};
       \node[vertex] (c21) at (0.25,1) {};
       \node[vertex] (c31) at (0.5,1) {};
       \node[vertex] (d11) at (0,1.5) {};
       \node[vertex] (d21) at (0.25,1.5) {};

      \draw[edge] (a1)edge(b11) (a1)edge(b21) (a1)edge(b31) (a1)edge(b41) (a1)edge(b51) (b11)edge(c11) (b21)edge(c21) (b31)edge(c31) (c11)edge(d11)  (c21)edge(d21) ;

         \end{tikzpicture}
\end{tabular}
  \caption{A bush $\Gamma$ of height $3$ and of type ${\rm tp}(\Gamma)=(0,2,1,2)$}
  \la{fig:bush}
\end{figure}

\bdefi[\emph{Accompanying  sequence of a bush}]\label{def: adopted} 
Let $\Gamma$ be a bush  of height $m>0$. We associate with $\Gamma$ a sequence $(p_i, r_i)_{i=1,\ldots,m}\in (\mathbb{k}[u])^{2m}$ of pairs of monic polynomials with simple roots  such that
\begin{itemize}\item $\deg p_i$ equals the number of vertices of $\Gamma$ on level $i$;  \item $p_{i+1}|p_i$ for $i=1,\ldots,m-1$;\item $r_ip_i=p_1$ for $i=1,\ldots,m$. \end{itemize} 
 Thus, $r_1=1$ and $r_i|r_{i+1}$ for $i=1,\ldots,m-1$.  Furthermore, $\tp(\Gamma)=(n_i)_{i\ge 0}$ where $n_i=\deg p_{i}-\deg p_{i+1}$; see Definition \ref{def: fiber tree}. For instance, for $\Gamma$ in Figure \ref{fig:bush} one can choose 
\be\la{eq:accomp-sec} p_1=\prod_{i=0}^4(u-i),\quad p_2 = \prod_{i=0}^2(u-i),
\quad p_3=\prod_{i=0}^1(u-i),\quad\mbox{and}\quad r_i=p_1/p_i,\,\,\,i=1,2,3\,.
\ee

To any root $\alpha$ of $p_1$ there corresponds a unique branch $\cB_\alpha$ of $\Gamma$ of height $l(\alpha):=\h(\cB_\alpha)\ge 1$ where $l(\alpha)$ satisfies
\be\label{eq: order} p_1(\alpha)=p_2(\alpha)=\ldots=p_{l(\alpha)}(\alpha)=0\quad\mbox{and}\quad p_{l(\alpha)+1}(\alpha) \neq 0\,\ee or, which is equivalent, 
\be\label{eq: order-1} r_1(\alpha)\neq 0,\ldots, r_{l(\alpha)}(\alpha)\neq 0,\quad r_{l(\alpha)+1}(\alpha)=\ldots=r_m(\alpha)=0\,.\ee
\edefi

\bexa\la{ex: Dani-bush} Consider the surface $X_m=\{z^mt-p(u)=0\}$ in $\A^3$ of Danielewski type with the projection $\pi_{m}\colon X_m\to\A^1$, $(z,u,t)\mapsto z$.
Then $\Gamma_0(\pi_{m})$ is a bush with $d=\deg p$ branches  of equal length $m$ and $(p_i,r_i)=(p,1)$ for $i=1,\ldots,m$;  cf.\ Examples I.3.10, I.3.11, and I.7.10. 
\eexa

\brem\la{rem: mu-bush}  The accompanying sequence $(p_i, r_i)_{i=1,\ldots,m}$ of $\Gamma$ is uniquely determined by the pair $(\Gamma, p_1)$.
Given $\Gamma$ this family of polynomials is parameterized by the coefficients of $p_1$, that is, by the points in $\A^{n} \setminus D_n$ where $n=\deg p_1$ and $D_n=\{{\rm discr}(p_1)=0\}$ is the discriminant hypersurface in $\A^{n}$.
\erem

Hereafter we adopt the following convention. 

\bconv[\emph{Reduction of the base field}]\la{conv-C}  Given a finite collection of affine varieties over $\mathbb{k}$ defined by systems of polynomial equations in $\A^N$ one can replace the base field $\mathbb{k}$ by the finite extension $\Q\subset \mathbb{k}' $ generated in $\mathbb{k}$ by the coefficients of all these polynomials. Choosing an embedding $\mathbb{k}'\hookrightarrow\C$ one may  assume that $\mathbb{k}=\C$. 
\econv

In the next proposition we associate to any bush $\Gamma_0$ along with an accompanying
sequence $(p_i,r_i)$  a certain Danielewski-Fieseler surface $\pi\colon X\to\A^1$ in  $\A^N$ such that $\Gamma_0(\pi)=\Gamma_0$; cf.\  \cite{Du0, Du1, DP0}.

\bprop\la{9.1} Fix a bush $\Gamma_0$ of height $m>0$ and an accompanying sequence  $(p_i, r_i)_{i=1,\ldots,m}$ as in {\rm \ref{def: adopted}}.  For $1\le j\le m$ consider the subvariety  $W_j\subset\A^{j+2}=\Spec\mathbb{k}[z,u,t_1, \ldots ,t_j]$ given by 
\be\label{eq: bush} zt_1-p_1(u)=0,\quad zt_i-r_i(u)t_{i-1}=0,\quad i=2,\ldots,j\,.\ee
Then the following hold. 
\begin{itemize}
\item[{\rm (i)}] There is a unique irreducible component $X_j$ of $W_j$ which dominates the $z$-axis; 
\item[{\rm (ii)}]  $\pi_j:=z|_{X_j}\colon X_j\to \A^1$ is a GDF surface with the unique special fiber $\pi_j^{-1}(0)$; 
\item[{\rm (iii)}] $(X_j)_{j=1,\ldots,m}$ fits in a sequence \eqref{eq: trivializing-seq-aff-modif}
of fibered modifications 
\be\la{eq: seq-aff-modif-final} 
X_m \stackrel{\varrho_m}{\longrightarrow} X_{m-1} \stackrel{}{\longrightarrow}  
\ldots \stackrel{}{\longrightarrow}   X_1 \stackrel{\varrho_1}{\longrightarrow} 
X_0=\A^2=\Spec \mathbb{k}[z,u]\,
\ee 
where 
$$\rho_i\colon (z,u,t_1, \ldots ,t_i)\mapsto (z,u,t_1, \ldots ,t_{i-1})\,,\quad i=1,\ldots,m\,;$$
\item[{\rm (iv)}] 
$\Gamma_0(\pi_j)
$ is the bush $\Gamma_0$ truncated at the level $j$. In particular,  $\Gamma_0(\pi_m)=\Gamma_0$.  
\end{itemize}
\eprop

\bproof  
Consider  the hyperplane $L_0=\{z=0\}$ in $\A^{j+2}$. The projection 
\be\la{eq:pi} \pi\colon \A^{j+2}\to\A^{2}, \quad (z,u,t_1, \ldots ,t_j)\mapsto (z,u)\,\ee restricted to $W_j\setminus L_0$ yields an isomorphism $W_j\setminus L_0\cong\A^1_*\times\A^1$. The closure $X_j:=\overline{ W_j\setminus L_0}$ is a surface satisfying (i) such that $\pi_j|_{ X_j\setminus L_0}\colon {X_j\setminus L_0}\to\A^1_*$ is induced by the first projection of $\A^1_*\times\A^1$. Any component of $W_j$ different from $X_j$ is of the form $\{(0,\alpha)\}\times\A^j$ where $\alpha$ is a root of $p_1$ and $\A^j=\Spec \mathbb{k}[t_1,\ldots,t_j]$.

Let us introduce the following notation. Given a root $\alpha$ of $p_1$ consider the open set $\omega(\alpha)\subset\A^1=\Spec \mathbb{k}[u]$ given by $p_1(u)/(u-\alpha)\neq 0$. Let $U_0(\alpha)=\A^1\times\omega(\alpha)\subset X_0=\Spec \mathbb{k}[z,u]$, and for $j=1,\ldots,m$ let $U_j(\alpha)=\pi^{-1}(U_0(\alpha))\subset X_j$ where $\pi$ is as in \eqref{eq:pi}. The collection $\{U_j(\alpha)\}_{p_1(\alpha)=0}$ gives an open covering of $X_j$. Restricting to $U_j(\alpha)$ amounts to passing to the localization 
$$\cO_{U_j(\alpha)}(U_j(\alpha))=\cO_{X_j}(X_j)[(u-\beta)^{-1} | p_1(\beta)=0,\beta\neq\alpha]\,.$$ The projection $\pi_j|_{U_j(\alpha)}\colon U_j(\alpha)\to \A^1=\Spec \mathbb{k}[z]$ is dominant and defines an $\A^1$-fibered surface such that $U_j(\alpha)\setminus L_0=X_j\setminus L_0$. 

\smallskip

\noindent {\bf Claim.} \emph{Let {\rm (\ref{eq: order-1})} holds for a root $\alpha$ of $p_1$ with $l(\alpha)=l$. Then the pair $(z,t_j-t_j(x))$ for $j\in\{1\ldots,l\}$ and $(z,t_l-t_l(x))$ for $j\in\{l,\ldots,m\}$, respectively, is a local analytic coordinate system near any point $x\in X_j$ such that $z(x)=0$ and $u(x)=\alpha$. In particular, $(X_j,x)$ is a smooth surface germ. The divisor $\pi_j^{*}(0)$ is reduced, the fiber $\pi_j^{-1}(0)$ is smooth, and each of its components is isomorphic to $\A^1$. }

\smallskip 

\noindent \emph{Proof of the claim.} Replacing $u$ by $u-\alpha$ one may assume that $\alpha=0$. All the assertions but the last one are local, hence one may restrict to the neighborhood $U_j:=U_j(0)$ of $x$. 

We proceed by induction on $j$. Since $p_1(u)/u\in\mathbb{k}[u]$ does not vanish in $U_0$ one has by  (\ref{eq: bush}):
$$\cO_{U_1}(U_1)=\cO_{U_0}(U_0)[p_1(u)/z]=\cO_{U_0}(U_0)[u/z]\,.$$ 
Thus, $U_1\to U_0$ is an affine modification of $U_0\subset\A^2=\Spec \mathbb{k}[z,u]$ along the reduced divisor $z^*(0)$ whose center is  the ideal $(z,u)$. This yields an embedding $U_1\hookrightarrow\A^2=\Spec \mathbb{k}[z,t'_1]$ where $t'_1=u/z$ commuting with the projections to $\A^1=\Spec \mathbb{k}[z]$ and sending the exceptional divisor to the axis $\{z=0\}$ in $\A^2$. Since $t_1=p_1(u)/z$ and $t_1'$ differ  in $U_1$ by the invertible factor $p_1(u)/u$ our assertions hold for $j=1$.

Assume by recursion that for some $j<l$ there is an embedding $U_j\hookrightarrow \A^2=\Spec \mathbb{k}[z,t_j']$ which commutes with the projections to $\A^1=\Spec \mathbb{k}[z]$. Consider the map $U_{j+1}\to U_{j}$ forgetting the last coordinate $t_{j+1}=r_{j+1}(u)t_{j}/z$, see \eqref{eq: bush}. Since $j+1\le l$ the function $r_{j+1}$ is invertible in $U_{j}$, see \eqref{eq: order-1}. 
Hence
\be\la{eq:ind-step} \cO_{U_{j+1}}(U_{j+1})=\cO_{U_{j}}(U_{j})[r_{j+1}(u)t_{j}/z]=\cO_{U_{j}}(U_{j})[t_{j}/z]\,.\ee This yields an embedding $U_{j+1}\hookrightarrow\A^2=\Spec \mathbb{k}[z,t_{j+1}']$ commuting with the projections to $\A^1=\Spec \mathbb{k}[z]$ where $t_{j+1}':=t_{j}/z$ differs  in $U_{j+1}$ from $t_{j+1}$ by an invertible factor $r_{j+1}$. Hence $(z,t_{j+1})$ gives local analytic coordinates near the axis $z=0$. This proves the claim for $j=1,\ldots,l$.

The first equation in \eqref{eq: bush}
gives $u/z=ut_1/p_1(u)\in\cO_{U_j}(U_j)$. Hence $r_{j+1}(u)/z\in\cO_{U_j}(U_j)$, and so, $ \cO_{U_{j+1}}(U_{j+1})=\cO_{U_{j}}(U_{j})$ for all $j\in\{l,\ldots,m-1\}$, see \eqref{eq:ind-step}. This means that $U_{j+1}\to U_{j}$ is an isomorphism commuting with the projections to $\A^1=\Spec \mathbb{k}[z]$. Now the claim follows.

\smallskip 

According to the claim for any $j=1,\ldots,m$ the surface $X_j$ is smooth, the fiber $\pi_j^*(0)$ is reduced, and the unique component  $F_{\alpha,j}\subset X_j$ of this fiber with $u|_{F_{\alpha,j}}=\alpha$ is an $\A^1$-curve parameterized by $t_l$
where $l=\min\{j,l(\alpha)\}$. This yields (ii). 

It follows also that the morphism $\rho_{j+1}\colon X_{j+1}\to X_j$ as in (iii) sends the affine line $F_{\alpha, j+1}\subset X_{j+1}$ isomorphically onto the affine line $F_{\alpha, j}\subset X_{j}$  for any  $j=l,\ldots,m-1$ and contracts $F_{\alpha, j+1}$ to the point $(0, \alpha,0,\ldots,0)\in\A^{j+2}$   for any $j=0,\ldots, l-1$. This proves (iii) and shows as well that $F_{\alpha, j}$ is on level $j$ for $1\le j\le l-1$ and on level $l$ for $l\le j\le m$. Hence one has $\Gamma_0(\pi_j)= (\Gamma_0)_{\le j}$ as stated in (iv). 
\eproof

\brem\la{rem:localization}
It follows from the proof that $\Gamma_0(\pi_j|_{U_j(\alpha)})=(\cB_\alpha)_{\le j}$ for $j=1,\ldots,l(\alpha)$ and $\Gamma_0(\pi_j|_{U_j(\alpha)})=\cB_\alpha$ for $j=l(\alpha),\ldots,m$. In particular,
the graph $\Gamma_0(\pi_m|_{U_m(\alpha)})$ coincides with the branch $\cB_\alpha$  of $\Gamma_0$ of height $\h(\cB_\alpha)=l(\alpha)$. 
\erem

From Proposition \ref{9.1} and its proof we deduce such a corollary.

\bcor\la{cor:loc-coord} We keep the notation as before. Given a root  $\alpha$ of $p_1$ consider the fiber component $F_\alpha=F_{\alpha,m}=\{z=0, u-\alpha=0\}\subset\pi_m^{-1}(0)$ on $X_m$. Then  the following hold. 
 \begin{enumerate}
 \item[{\rm (a)}] $t_{l(\alpha)}$ coincides with $\frac{u-\alpha}{z^{l(\alpha)}}$ up to a factor which is a regular and invertible function in a Zariski open neighborhood $U_m(\alpha)\subset X_m$ of $F_\alpha$; 
\item[{\rm (b)}] $(z, t_{l(\alpha)})$ yields a local analytic coordinate system in $X_m$ near $F_\alpha$; 
\item[{\rm (c)}] for $i\in\{1,\ldots,m\}$ the restriction $r_i(u)t_i|_{F_\alpha}$ vanishes if $i\neq l(\alpha)$ and gives an affine coordinate on $F_\alpha\cong\A^1$ if $i=l(\alpha)$. \end{enumerate}
\ecor

\bproof  Statements (a)  and (b) follow by an argument in the proof of Proposition \ref{9.1}
(see the Claim). 

(c) By \eqref{eq: order-1} one has $r_i(\alpha)=0$ for $i>l(\alpha)$ and $r_i(\alpha)\neq 0$ for $i\le l(\alpha)$. Due to (b),  $r_i(\alpha)t_i|_{F_\alpha}$ is an affine coordinate if $i=l(\alpha)$. If $i<l(\alpha)$ then $r_{i+1}(\alpha)\neq 0$ and, by \eqref{eq: bush}, $t_i|_{F_\alpha}=z\frac{t_{i+1}}{r_{i+1}(\alpha)}|_{F_\alpha}=0$.
\eproof

\section{Spring bushes  versus bushes} \label{sec:spring-bushes}
In this section we extend the results of Section \ref{sec:bushes} to the Danielewski-Fieseler surfaces whose fiber trees are \emph{spring bushes}. 
 
\bdefi[\emph{Spring bush}]
\label{def: spring-bush}
A rooted tree $\widehat\Gamma$ of height $\hat h\ge 1$ is called a \emph{spring bush} if the truncation
$\Gamma:=\widehat\Gamma_{\le\hat h-1}$ is  a bush  
sharing  the same root with $\Gamma$. Thus, any leaf of $\widehat\Gamma$ is a neighbor of a leaf of $\Gamma$. A spring bush is not necessarily a bush  (see Figure \ref{fig:spring-bush}).
\edefi

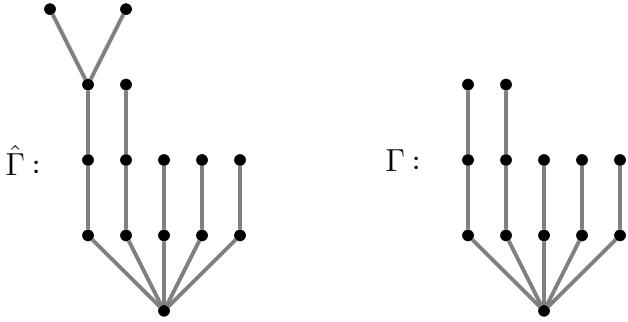
\begin{figure}[h]
\begin{tabular}{l@{\hspace{70pt}}l@{\hspace{70pt}}l}
    \pgfdeclarelayer{background}
		\pgfsetlayers{background,main}
    \begin{tikzpicture}[scale=2,>=latex]
      \node[vertex] (a) at (0.5,0) {};
      \node[vertex] (b1) at (0,0.5) {};
      \node[vertex] (b2) at (0.25,0.5) {};
      \node[vertex] (b3) at (0.5,0.5) {};
      \node[vertex] (b4) at (0.75,0.5) {};
      \node[vertex] (b5) at (1,0.5) {};
       \node[vertex] (c1) at (0,1) {};
       \node[vertex] (c2) at (0.25,1) {};
       \node[vertex] (c3) at (0.5,1) {};
       \node[vertex] (c4) at (0.75,1) {};
       \node[vertex] (c5) at (1,1) {};
       \node[vertex] (d1) at (0,1.5) {};
       \node[vertex] (d2) at (0.25,1.5) {};
       \node[vertex] (e1) at (-0.25,2) {};
       \node[vertex] (e2) at (0.25,2) {};
       \node[left] at (c1.west) {$\hat\Gamma:\quad$} {};
      
      \draw[edge] (a)edge(b1) (a)edge(b2) (a)edge(b3) (a)edge(b4) (a)edge(b5) (b1)edge(c1) (b2)edge(c2) (b3)edge(c3) (b4)edge(c4) (b5)edge(c5) (c1)edge(d1)  (c2)edge(d2) (d1)edge(e1)  (d1)edge(e2);
    
  \begin{scope}[xshift=2.5cm]
      \node[vertex] (a1) at (0.5,0) {};
      \node[vertex] (b11) at (0,0.5) {};
      \node[vertex] (b21) at (0.25,0.5) {};
      \node[vertex] (b31) at (0.5,0.5) {};
      \node[vertex] (b41) at (0.75,0.5) {};
      \node[vertex] (b51) at (1,0.5) {};
       \node[vertex] (c11) at (0,1) {};
       \node[vertex] (c21) at (0.25,1) {};
       \node[vertex] (c31) at (0.5,1) {};
       \node[vertex] (c41) at (0.75,1) {};
       \node[vertex] (c51) at (1,1) {};
       \node[vertex] (d11) at (0,1.5) {};
       \node[vertex] (d21) at (0.25,1.5) {};
       \node[left] at (c11.west) {$\Gamma:\quad$} {};
      
      \draw[edge] (a1)edge(b11) (a1)edge(b21) (a1)edge(b31) (a1)edge(b41) (a1)edge(b51) (b11)edge(c11) (b21)edge(c21)  (b31)edge(c31) (b41)edge(c41) (b51)edge(c51) (c11)edge(d11)  (c21)edge(d21) ;
\end{scope}
         \end{tikzpicture}
\end{tabular}
  \caption{A spring bush $\hat\Gamma$  of type ${\rm tp}(\hat\Gamma)=(0,0,3,1,2)$ over a bush $\Gamma=\hat\Gamma_{\le 3}$
}
  \la{fig:spring-bush}
\end{figure}

\bdefi[\emph{Accompanying sequence of a spring bush}]\la{def: acomp-system-of-spring-bush} Let $\hat\Gamma$  be a spring bush of height $\hat h=\h(\hat\Gamma)$ over a bush
$\Gamma$ of height $h=\hat h-1$, and let $(p_i,r_i)_{i=1,\ldots,h}$ be the accompanying sequence of $\Gamma$. Recall that for each root $\alpha$ of $p_1$ there corresponds a unique linear branch $\cB_\alpha$ of $\Gamma$ (see Definition \ref{def: adopted}). Let $p_{\hat h}$ be the monic polynomial  dividing  $p_{h}$ such that the roots of $p_{\hat h}$ correspond to the branches of $\Gamma$ linked to the top vertices of $\hat\Gamma$. Let also $r_{\hat h}=p_1/p_{\hat h}$. For instance, for $\hat\Gamma$ in Figure \ref{fig:spring-bush} along with the accompanying sequence \eqref{eq:accomp-sec} of $\Gamma$ and the root $\alpha_1=0$ of $p_1$ one has $(p_4,r_4)=(u,p_1/u)$. 

For a branch $\cB_\alpha$ of $\Gamma$ linked  to $n(\alpha)$ leaves of $\hat\Gamma$ we fix a monic polynomial  with simple roots $q_\alpha\in \mathbb{k}[v]$  of degree $n(\alpha)$ where $q_\alpha(v)=v$ if $n(\alpha)=1$.  We let 
\be\la{eq: q} q(u,v)=\sum_{p_1(\alpha)=0} q_\alpha(v)\frac{p_1(u)}{u-\alpha}\in \mathbb{k}[u,v]\,.\ee
The system of polynomials $\{(p_i,r_i)_{i=1,\ldots,\hat h}, q\}$ is called an \emph{accompanying sequence of $\hat\Gamma$}. 
\edefi

Our main result in this section is the following.

\bprop\la{9.2} Let $\hat \Gamma$ be a spring bush, and let $\tilde\Gamma$ be a bush with  $\tp(\widehat \Gamma)=\tp(\widetilde\Gamma)$, see  Figure {\rm \ref{fig:spring-bushe-versus-bush}}.  Letting $B=\Spec\mathbb{k}[z]\cong\A^1$
consider Danielewski-Fieseler surfaces $\pi_{\hat X}\colon\hat X\to B$ and $\pi_Y\colon Y\to B$ with the unique special fibers over $0\in B$ such that $\Gamma_0(\hat X)=\hat \Gamma$ and $\Gamma_0(Y)=\tilde\Gamma$. Then there is an isomorphism of cylinders  $\phi\colon\hat \cX\stackrel{\cong_B}{\longrightarrow}\cY$ which sends the components of $z^{*}(0)$ in $\hat \cX$ to components of $z^{*}(0)$ in $\cY$ preserving the levels. 
\eprop

\begin{figure}[h]
\begin{tabular}{l@{\hspace{70pt}}l@{\hspace{70pt}}l}
    \pgfdeclarelayer{background}
		\pgfsetlayers{background,main}
    \begin{tikzpicture}[scale=2,>=latex]
      \node[vertex] (a) at (0.5,0) {};
      \node[vertex] (b1) at (0.25,0.5) {};
      \node[vertex] (b3) at (0.75,0.5) {};
       \node[vertex] (c1) at (0,1) {};
       \node[vertex] (c2) at (0.5,1) {};
       \node[left] at (b1.west) {$\hat\Gamma:\quad$} {};
      
      \draw[edge] (a)edge(b1) 
                            (a)edge(b3) 
                           (b1)edge(c1) (b1)edge(c2); 

\begin{scope}[xshift=2.5cm]
      \node[vertex] (a2) at (0.5,0) {};
      \node[vertex] (b12) at (0.25,0.5) {};
      \node[vertex] (b22) at (0.5,0.5) {};
      \node[vertex] (b32) at (0.75,0.5) {};
       \node[vertex] (c12) at (0.25,1) {};
       \node[vertex] (c22) at (0.5,1) {};
       \node[left] at (b12.west) {$\tilde\Gamma:\quad$} {};
      
      \draw[edge] (a2)edge(b12) (a2)edge(b22) (a2)edge(b32)
(b12)edge(c12) (b22)edge(c22) ; 
\end{scope}
         \end{tikzpicture}
\end{tabular}
\caption{A spring bush $\hat\Gamma$  and a bush
 $\tilde\Gamma$ of the same type $(0,1,2)$
}
  \la{fig:spring-bushe-versus-bush}
\end{figure}
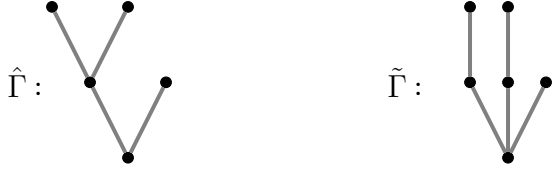

The proof is done at the end of this section. We start with the following elementary fact that can be interpreted in terms of commuting affine modifications, cf.\ I.1.4.2.

\blem\label{new2} Let $A$ be an affine domain, and let $a_1, \ldots , a_k, b_1, \ldots , b_l\in{\rm Frac}(A)$ be elements of the field of fractions of $A$. Consider the extensions
$A'=A[a_1, \ldots , a_k]$, $B=A[b_1, \ldots , b_l]$, and $B' =A[a_1, \ldots , a_k, b_1, \ldots , b_l]$. Then $B'=B[a_1, \ldots , a_k]$, that is,  the extensions $A\subset A'$ and $B\subset  B'$ share the same system of generators $a_1, \ldots , a_k$.
\elem

\bnota\la{not:three-bushes} In the remaining part of this section we consider the following objects.
\begin{itemize}\item $\hat\Gamma$ - a spring bush of height $\h(\hat\Gamma)=m+1\ge 3$ over a bush $\Gamma$ along with an accompanying sequence $\{(p_i,r_i)_{i=1,\ldots,m+1}, q\}$ as in \ref{def: acomp-system-of-spring-bush}. According to Remark \ref{rem: 9.1a} we may and we will assume that  $\Gamma$ has no branch of height $1$;

\item $\Gamma_h$ - the subbush of height $m+1$ of $\hat\Gamma$ obtained by removing on each branch  of $\hat\Gamma$ of height $m+1$ all leaves but one, see Figure \ref{fig:spring-bushes};

\item $\Gamma_s$ - the subbush of height $m$ of $\Gamma_h$ obtained from $\Gamma_h$ by removing all the leaves of $\Gamma_h$, see Figure \ref{fig:spring-bushes};

\item 
$\cB_s(\alpha)$ - the branch of
$\Gamma_s$ 
of height $l_s(\alpha)=\h(\cB_s (\alpha))$ which corresponds to a root $\alpha$ of $p_1$;
$\cB_h(\alpha)\subset\Gamma_h$  and $l_h(\alpha)$ are defined likewise;

\item $B=\Spec \mathbb{k}[z]\cong\A^1$;

\item $\hat\pi\colon\hX\to B$,  $\pi_h\colon X_h\to B$, and $\pi_s\colon X_s\to B$ - the Danielewski-Fieseler surfaces with a unique special fiber over $0\in B$ such that $\Gamma_0(\hat\pi)=\hat\Gamma$, $\Gamma_0(\pi_h)=\Gamma_h$, and $\Gamma_0(\pi_s)=\Gamma_s$ where

\item $X_h\subset \A^{m+3}$ and $X_s\subset \A^{m+2}$ satisfy the corresponding systems \eqref{eq: bush}. So, \eqref{eq: bush} provides the extensions
 $$\mathbb{k}[z,u]\subset\cO_{X_s}(X_s)\subset\cO_{X_h}(X_h)\,;$$

\item  
$F_s(\alpha)\subset X_s$ - the fiber component of $z^*(0)$ corresponding to the leaf of 
$\cB_s (\alpha)$;

\item 
$t_s(\alpha)=(u-\alpha)/z^{l_s(\alpha)}$ - an affine coordinate on 
$F_s(\alpha)$, 
see Corollary \ref{cor:loc-coord}(a);

\item $F_h(\alpha)\subset X_h$ and $t_h(\alpha)$ are defined likewise;

\item $\hat\cX, \cX_h, \cX_s$ - the cylinders over $\hX, X_h$, and $X_s$, respectively;
\item $\cF_s(\alpha)=F_s(\alpha)\times\A^1\cong\Spec \mathbb{k}[t_s(\alpha),v]\cong\A^2$ - 
the fiber component of $z^*(0)$ in $\cX_s$ which corresponds to 
$\alpha$;  $\cF_h(\alpha)\subset \cX_h$ is  defined likewise.

\end{itemize}
\enota

\begin{figure}[h]
\begin{tabular}{l@{\hspace{70pt}}l@{\hspace{70pt}}l}
    \pgfdeclarelayer{background}
		\pgfsetlayers{background,main}
    \begin{tikzpicture}[scale=2,>=latex]
      \node[vertex] (a) at (0.5,0) {};
      \node[vertex] (b1) at (0.25,0.5) {};
      \node[vertex] (b2) at (0.5,0.5) {};
      \node[vertex] (b3) at (0.75,0.5) {};
       \node[vertex] (c1) at (0.25,1) {};
       \node[vertex] (c2) at (0.5,1) {};
       \node[vertex] (c3) at (0.75,1) {};
       \node[vertex] (d1) at (0.25,1.5) {};
       \node[vertex] (d2) at (0.5,1.5) {};
       \node[vertex] (e1) at (0,2) {};
       \node[vertex] (e2) at (0.5,2) {};
       \node[left] at (c1.west) {$\hat\Gamma:\quad$} {};
      
      \draw[edge] (a)edge(b1) (a)edge(b2) (a)edge(b3) 
(b1)edge(c1) (b2)edge(c2) (b3)edge(c3) 
(c1)edge(d1)  (c2)edge(d2) (d1)edge(e1)  (d1)edge(e2);
    
  \begin{scope}[xshift=2.5cm]
      \node[vertex] (a1) at (0.5,0) {};
      \node[vertex] (b11) at (0.25,0.5) {};
      \node[vertex] (b21) at (0.5,0.5) {};
      \node[vertex] (b31) at (0.75,0.5) {};
       \node[vertex] (c11) at (0.25,1) {};
       \node[vertex] (c21) at (0.5,1) {};
       \node[vertex] (c31) at (0.75,1) {};
       \node[vertex] (d11) at (0.25,1.5) {};
       \node[vertex] (d21) at (0.5,1.5) {};
       \node[vertex] (e11) at (0.25,2) {};
       \node[left] at (c11.west) {$\Gamma_h:\quad$} {};
      
      \draw[edge] (a1)edge(b11) (a1)edge(b21) (a1)edge(b31) 
(b11)edge(c11) (b21)edge(c21)  (b31)edge(c31) 
(c11)edge(d11)  (c21)edge(d21)
(d11)edge(e11) ; 
\end{scope}

\begin{scope}[xshift=5cm]
      \node[vertex] (a2) at (0.5,0) {};
      \node[vertex] (b12) at (0.25,0.5) {};
      \node[vertex] (b22) at (0.5,0.5) {};
      \node[vertex] (b32) at (0.75,0.5) {};
       \node[vertex] (c12) at (0.25,1) {};
       \node[vertex] (c22) at (0.5,1) {};
       \node[vertex] (d12) at (0.25,1.5) {};
       \node[left] at (c12.west) {$\Gamma_s:\quad$} {};
      
      \draw[edge] (a2)edge(b12) (a2)edge(b22) (a2)edge(b32) 
(b12)edge(c12) (b22)edge(c22) (c12)edge(d12) ;
\end{scope}
         \end{tikzpicture}
\end{tabular}
\caption{A spring bush $\hat\Gamma$  
and its subbushes $\Gamma_h$ and $\Gamma_s$
}
  \la{fig:spring-bushes}
\end{figure}
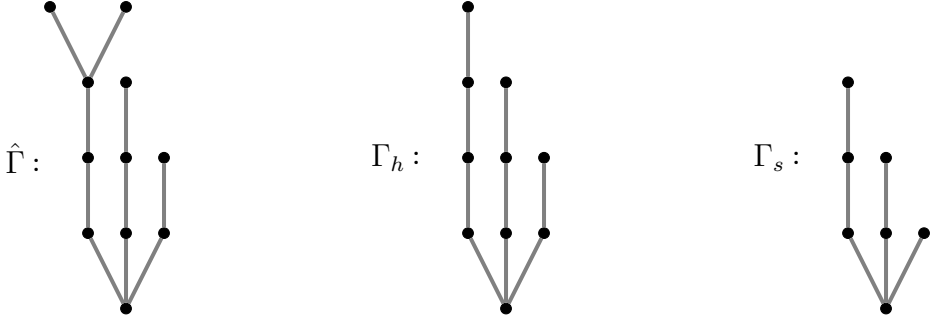

\blem\label{new1} There is an isomorphism $\hat \cX\cong_B V$ where the affine threefold $V$ results from  the affine modification $V\to\cX_h$ along the divisor $z^*(0)$ on $\cX_h$ with center  the ideal $(z,q(u,v))\subset\cO_{\cX_h}(\cX_h)$ for $q\in \mathbb{k}[u,v]$ as in \eqref{eq: q}.
\elem

\bproof Let $t_s\in\cO_{X_s}(X_s)$ be such that $t_s|_{F_s(\alpha)}=t_s(\alpha)$ for any root $\alpha$ of $p_1$. The extension
\be\la{eq:ext-1} \cO_{X_s}(X_s)\subset\cO_{X_h}(X_h)=\cO_{X_s}(X_s)[t_s/z]\,\ee  amounts to the fiber modification $\sigma_h\colon X_h\to X_s$ along the divisor $z^*(0)$ whose center is  the ideal $(z, t_s)\subset\cO_{X_s}(X_s)$. The extension
\be\la{eq:ext-2} \cO_{X_s}(X_s)\subset\cO_{\hX}(\hX)=\cO_{X_s}(X_s)[q(u,t_s)/z]\,\ee amounts to the fiber modification $\hX\to X_s$ along the divisor $z^*(0)$ whose center is  the ideal $(z, q(u,t_s))\subset\cO_{X_s}(X_s)$. 

Letting $\cO_{\cX_s}(\cX_s)=\cO_{X_s}(X_s)[v]$, by Lemma I.5.1 applied to \eqref{eq:ext-2} the cylinder $\hat \cX$ can be obtained from 
$\cX_s$ via the affine modification $\hat\sigma\colon\hat \cX\to \cX_s$ along the divisor $z^*(0)$ whose center is  the ideal
$J_s=(z,q(u,t_s), v)\subset\cO_{\cX_s}(\cX_s)$, that is,
\be\la{eq:ext-3} \cO_{\hat \cX}(\hat \cX)\cong_{\mathbb{k}[z]}\cO_{\cX_s}(\cX_s)[q(u,t_s)/z,v/z]\,.\ee 
The ideal $J_s$ defines a reduced zero dimensional subscheme $\cS\subset\cX_s$. Let 
$$\cS(\alpha)=\cS\cap\cF_s(\alpha)=\cF_s(\alpha)\cap\{q_\alpha(t_s)=0\}\,.$$ 
One has ${\rm card}\,\cS(\alpha)=n(\alpha)\ge 1$, see \eqref{eq: q}. In the case $l_s(\alpha)<m$ one has $n(\alpha)=1$ and $\cS_\alpha=\{t_s=0,v=0\}\subset\cF_s(\alpha)$.
Let $$\mathfrak{S}(\alpha)=\{t_s=0,q_\alpha(v)=0\}\subset\cF_s(\alpha)\,.$$ One has
${\rm card}\,\cS(\alpha)={\rm card}\,\mathfrak{S}(\alpha)$ and, moreover, $\cS(\alpha)=\mathfrak{S}(\alpha)$ if $l_s(\alpha)<m$, see \ref{def: acomp-system-of-spring-bush}. According to Theorem  I.4.4 the relative flexibility holds for the cylinder $\cX_s$  (see Definition I.4.2). Hence there is an automorphism $\tau\in\SAut_B(\cX_s)$ such that 
\begin{itemize} \item $\tau(\cF_s(\alpha))=\cF_s(\alpha)$ $\forall\alpha$;
\item $\tau|_{\cF_s(\alpha)}={\rm id}$ if $l_s(\alpha)<m$; 
\item
$\tau(\cS(\alpha))= \mathfrak{S}(\alpha)$ if $l_s(\alpha)=m$. 
\end{itemize}
Thus, $\tau(\cS(\alpha))= \mathfrak{S}(\alpha)$ for any $\alpha$, and so, $\tau_*$ sends $J_s$ to the ideal $I_s:=(z, t_s, q(u,v))\subset\cO_{\cX_s}(\cX_s)$.

Consider further the affine threefold $W=\Spec \cO_{\cX_s}(\cX_s)[t_s/z, q(u,v)/z]$. It results from the affine modification of $\cX_s$ along the divisor $z^*(0)$ whose center is the ideal $I_s$. By virtue of \eqref{eq:ext-3}  and Lemma I.1.5, $\tau$ admits a lift to an isomorphism
$\hat \cX\stackrel{\cong_B}{\longrightarrow} W$. We claim that there is an isomorphism $W\cong_B V$. Indeed,
using \eqref{eq:ext-1} and Lemma \ref{new1} one obtains 
$$ 
 \cO_{W}(W)  =  \cO_{\cX_s}(\cX_s)[t_s/z, q(u,v)/z] $$
$$ = \cO_{\cX_s}(\cX_s)[t_s/z][q(u,v)/z] \cong_{\mathbb{k}[z]}  \cO_{\cX_h}(\cX_h)[q(u,v)/z]  = \cO_{V}(V)  \,.
$$
\eproof

In the  proof of Proposition \ref{9.2} we use the following Lemmas \ref{lem:triv-seq-X-h}--\ref{lem:triv-seq-Y}.

\blem\la{lem:triv-seq-X-h} We 
adopt Notation {\rm \ref{not:three-bushes}}. 
To a trivializing sequence \eqref{eq: trivializing-seq-aff-modif} of $X_h$:
\be\la{eq: triv-seq-X-h} 
X_h =  X_{h,m+1} \stackrel{\varrho_{m+1}}{\longrightarrow} X_{h,m} \stackrel{}{\longrightarrow}  
\ldots \stackrel{}{\longrightarrow}   X_{h,1} \stackrel{\varrho_{1}}{\longrightarrow} 
X_{h,0}=B\times\A^1\,
\ee 
there corresponds a trivializing sequence of cylinders
\be\la{eq: triv-seq-cX-h} 
\cX_h =  \cX_{h,m+1} \stackrel{r_{m+1}}{\longrightarrow} \cX_{h,m} \stackrel{}{\longrightarrow}  
\ldots \stackrel{}{\longrightarrow}   \cX_{h,1} \stackrel{r_{1}}{\longrightarrow} 
\cX_{h,0}=B\times\A^2\,
\ee
where $r_{i+1}=\varrho_{i+1}\times {\rm id_{\A^1}}$, $i=0,\ldots,m$. Given a root $\alpha$ of $p_1$ let $F_{h,i}(\alpha)$ be the corresponding component  of the divisor $z^{*}(0)$ on $X_{h,i}$  equipped with the affine coordinate $$t_{h,i}(\alpha)=(u-\alpha)\cdot z^{-l_{h,i}(\alpha)}$$ where $(u-\alpha)|_{F_{h,i}(\alpha)}=0$ and $l_{h,i}(\alpha)$ is the level of $F_{h,i}(\alpha)$,
see Corollary {\rm \ref{cor:loc-coord}(a)}. Then
the affine modification $r_{i+1}\colon  \cX_{h,i+1}\to
\cX_{h,i}$ amounts to the extension 
\be\la{eq:ext-X-h}\cO_{\cX_{h,i}}(\cX_{h,i})\subset \cO_{\cX_{h,i+1}}(\cX_{h,i+1})=\cO_{\cX_{h,i}}(\cX_{h,i})[t_{h,i}/z]\,\ee where $t_{h,i}\in\cO_{X_{h,i}}(X_{h,i})$ satisfies $$t_{h,i}|_{F_{h,i}(\alpha)}=\begin{cases} t_{h,i}(\alpha), &  l_h(\alpha)\ge i+1, \\ 0, & l_h(\alpha)\le i\,.\end{cases}$$
\elem

\bproof
The center of the affine modification $r_{i+1}\colon \cX_{h,i+1}\to\cX_{h,i}$ along the divisor $z^*(0)$ on $\cX_{h,i}$ is the union of the affine lines 
\be\la{eq:lines} L_{h,i}(\alpha)=\{t_{h,i}(\alpha)=0\}\cong\Spec \mathbb{k}[v]\cong\A^1\,\ee such that $l_h(\alpha)\ge i+1$. 
Each component 
\be\la{eq:planes} \cF_{h,i}(\alpha)=F_{h,i}\times\A^1\cong\Spec\mathbb{k}[t_{h,i}(\alpha),v]\cong\A^2\ee of $z^*(0)$ in $\cX_{h,i}$ contains at most one such line. Now the lemma follows easily.
\eproof

\blem\la{lem:V} Let $\tp(\hat\Gamma)=(n_0,\ldots,n_{m+1})$. Consider the polynomials 
$$ q_\alpha\in\mathbb{k}[u],\quad\deg q_\alpha=n(\alpha), \quad\mbox{and}\quad q\in \mathbb{k}[u,v]$$ 
as in \eqref{eq: q}, see {\rm \ref{def: acomp-system-of-spring-bush}}. 
For each $i=1,\ldots,m+1$  consider  
the affine threefold $$V_i=\Spec\cO_{\cX_{h,i}}(\cX_{h,i})[q(u,v)/z]$$ given  in $\cX_{h,i}\times\A^1$ by equation $zw-q(u,v)=0$ where $\A^1=\Spec \mathbb{k}[w]$. Then the natural projection $V_i\to\cX_{h,i}$ is the affine modification along the divisor $z^*(0)$ on $\cX_{h,i}$ with the reduced center $\cL_i=\cup_{\alpha,\beta} L_{\alpha,\beta}$ where  the affine line$$L_{\alpha,\beta}=\{v-\beta=0\}\subset\cF_{h,i}(\alpha)\cong\A^2\,,$$ see \eqref{eq:planes}, corresponds
to a solution $(\alpha,\beta)\in\A^2=\Spec \mathbb{k}[u,v]$ of  the system
$$p_1(u)=0,\quad q(u,v)=0\,.$$  The plane $ \cF_{h,i}(\alpha)$ contains $n(\alpha)$ such lines $L_{\alpha,\beta}$. 
The center of the modification $V_i\to\cX_{h,i}$ consists of 
\be\la{eq:N} N=\sum_{p_1(\alpha)=0} n(\alpha)=\sum_{k=0}^{m+1} n_k\ee affine lines. The reduced divisor $z^*(0)$ on $V_i$ has $N$ disjoint components $\cF_{i,(\alpha,\beta)}$ where
\be\la{eq:new-planes} \cF_{i,(\alpha,\beta)}=\{z=0,\, u=\alpha,\,v=\beta\}\cong\Spec \mathbb{k}[t_{h,i}(\alpha),w]\cong\A^2\,.\ee
\elem

\bproof
The proof is straightforward. 
\eproof

\blem\la{lem:seq-V} There is a sequence 
\be\la{eq: triv-seq-V} V_{m+1}\stackrel{\nu_{m+1}}{\longrightarrow} V_m\longrightarrow\ldots{\longrightarrow} V_2\stackrel{\nu_{2}}{\longrightarrow} V_1\,\ee where 
$\nu_{i+1}\colon V_{i+1}\to V_i$ is the affine modification along the divisor $z^*(0)$ with a reduced center $\mathfrak{S}_i$ defined by the ideal $(z,t_{h,i})\subset\cO_{V_{i}}(V_{i})$ and consisting of the 
\be\la{eq:N-i} N_i=\sum_{l_h(\alpha)\ge i+1} n(\alpha)=\sum_{k=i+1}^{m+1} n_k \ee affine lines
$L_{i,(\alpha,\beta)}$ with $l_h(\alpha)\ge i+1$. 
\end{lem}

\bproof
 Using Lemma \ref{new2} and \eqref{eq:ext-X-h} one obtains 
$$\cO_{V_{i+1}}(V_{i+1}) = \cO_{\cX_{h,i+1}}(\cX_{h,i+1}) [q(u,v)/z] = 
\cO_{\cX_{h,i}}(\cX_{h,i})[t_{h,i}/z][q(u,v)/z]$$
$$=\cO_{\cX_{h,i}}(\cX_{h,i})[q(u,v)/z][t_{h,i}/z]=\cO_{V_{i}}(V_{i})[t_{h,i}/z]\,.$$
\eproof

\blem\la{lem:triv-seq-Y} 
Let $\pi_Y\colon Y\to B=\A^1$ be a Danielewski-Fieseler surface as in  Proposition {\rm \ref{9.2}}, that is, 
$\Gamma_0(Y)$ is the bush $\tilde\Gamma$ with $\tp(\tilde\Gamma) = \tp(\hat\Gamma) = (n_0,n_1,\ldots,n_{m+1})$. Given a trivializing sequence
\be\la{eq: triv-seq-Y} Y=Y_{m+1}\stackrel{\sigma_{m+1}}{\longrightarrow} Y_m\longrightarrow\ldots\longrightarrow Y_1\stackrel{\sigma_{1}}{\longrightarrow} Y_0=B\times\A^1=\Spec \mathbb{k}[z,u]\,\ee
with 
$\Gamma_0(\pi_{Y_i})=\tilde\Gamma_{\le i}$ there is a trivializing sequence of cylinders  
\be\la{eq: triv-seq-cY} \cY=\cY_{m+1}\stackrel{s_{m+1}}{\longrightarrow} \cY_m\longrightarrow\ldots\longrightarrow \cY_1\stackrel{s_{1}}{\longrightarrow} \cY_0=B\times\A^2\,\ee where  for $i\ge 1$
 the morphism $\colon  \cY_{i+1}\to\cY_i$, $s_{i+1}=\sigma_{i+1}\times {\rm id_{\A^1}}$ is an affine modification along the reduced divisor $z^*(0)$ on $\cY_i$ with a reduced center $\cS_i$ such that 
 \begin{itemize}\item $z^{-1}(0)$ is a disjoint union of $N$ components $\tilde\cF_{i,j}\cong\A^2$ with $N$ as in \eqref{eq:N} where $\tilde\cF_{i,j}$ corresponds to the vertex on level $l(\tilde\cF_{i,j})$ of the branch $\tilde\cB_j$ of $\tilde\Gamma$;
 \item
the center $\cS_i$ is a disjoint union of $N_i$ affine lines $\tilde L_{i,j}\subset\tilde \cF_{i,j}$ with $\h(\tilde\cB_j)\ge i+1$ where $N_i$ is as in \eqref{eq:N-i} and $\cS_i\cap \cF_{i,j}$ consists of at most one line $L_{i,j}$.  
\end{itemize}
\elem

\bproof
Since $\tp(\tilde\Gamma) = \tp(\hat\Gamma) $ using  \eqref{eq:N-i} one obtains
\be\la{eq:N-i-prime}\card\{j\,\vert\,\h(\tilde\cB_j)\ge i+1\}=\sum_{k=i+1}^{m+1} n_k=N_i\,.\ee The remaining assertions are immediate. 
\eproof

\medskip

\noindent \emph{Proof of Proposition} \ref{9.2}.  By virtue of Theorem I.5.7 one may assume that 
$X_h$ satisfies \eqref{eq: bush} in  Proposition \ref{9.1} with $j=m+1=\h(\Gamma_h)=\h(\hat \Gamma)$. Due to Remark \ref{rem: 9.1a} one may suppose as well  that  $\Gamma_h$ do not have branches of height 1. By virtue of Lemma \ref{new1} the assertion of Proposition \ref{9.1} follows from the next claim by 
letting $i=m+1$.

\smallskip

\noindent {\bf Claim.} \emph{For any $i=1,\ldots,m+1$ there is an isomorphism $V_i\cong_B \cY_i$ which sends the components of $z^*(0)$ in $V_i$ to components of $z^*(0)$ in $\cY_i$ preserving the levels.} 

\smallskip

\noindent\emph{Proof of the claim.} We proceed by induction on $i$ using the trivializing sequences \eqref{eq: triv-seq-V} and  \eqref{eq: triv-seq-cY}.
By our assumption  for $i=1$ one has $N_1=N_0=N$, see \eqref{eq:N-i-prime}. Recall (see Lemma \ref{lem:V}) that $V_1\subset\A^5=\Spec \mathbb{k}[z,u,v,t,w]$ is given by  $$zt-p_1(u)=zw-q(u,v)=0\,.$$
Thus, $V_1$ results from an affine modification $V_1\to\A^3=\Spec \mathbb{k}[z,u,v]$ along the divisor $z^*(0)$  whose center is the ideal $(z, p_1(u), q(u,v))\subset \mathbb{k}[z,u,v]$ supported by a reduced zero dimensional scheme $\mathfrak{S}_0\subset\{z=0\}$ of cardinality $N$. By Lemma I.5.1 the cylinder $\cY_1$ results as well from an affine modification $\cY_1\to\A^3=B\times\A^2=\Spec \mathbb{k}[z,u,v]$ along the divisor $z^*(0)$ whose center is a reduced zero dimensional scheme $\cS_0\subset\{z=0\}$ of cardinality $N$. 
There is an automorphism $\tau$ of $\A^3$ with $\tau_*(z)=z$ and $\tau(\mathfrak{S}_0)=\cS_0$. By Lemma
I.1.5, $\tau$ can be lifted to an isomorphism $\varphi_1\colon V_1 \stackrel{\cong_B}{\longrightarrow} \cY_1$.
Thus, the claim holds for $i=1$. 

Suppose further that for some $i\in\{1,\ldots,m\}$ there is an isomorphism $\varphi_i\colon V_i\stackrel{\cong_B}{\longrightarrow}\cY_i$ which transforms
the $N$ affine planes $\cF_{i,(\alpha,\beta)}$ in \eqref{eq:new-planes} to the $N$ affine planes $\tilde\cF_{i,j}\subset\cY_i$  preserving the levels, cf.\ Lemmas \ref{lem:seq-V} and \ref{lem:triv-seq-Y}. The $N_i$ planes  $\cF_{i,(\alpha,\beta)}\subset V_i$ which carry the center $\mathfrak{S}_i=\cup_{l_h(\alpha)\ge i+1} L_{i,(\alpha,\beta)}$ of the blowup $\nu_{i+1}\colon V_{i+1}\to V_i$ are situated on the top level $i$. 
Hence their images $\tilde\cF_{i,j}$ are as well components of $z^*(0)$ in $\cY_i$ on the top level $i$. These  $N_i$ affine planes 
$\tilde\cF_{i,j}$ do not coincide, in general, with  the $N_i$ top level components of  $z^*(0)$ in $\cY_i$ which carry the center $\cS_i$ of the affine modification $s_{i+1}\colon\cY_{i+1}\to\cY_i$ from \eqref{eq: triv-seq-cY}. However, there is an automorphism $T$ of the bush $\Gamma_0(\pi_{Y_i})=(\tilde\Gamma)_{\le i}$  sending the $N_i$ branches of height $i$ carrying the vertices which correspond to the image of $\mathfrak{S}_i$ to the $N_i$ branches of height $i$ carrying the vertices which correspond to $\cS_i$. By Theorem I.5.7,  $T$ is induced by an automorphism $\tilde\tau\in\SAut_B(\cY_i)$. Applying $\tilde\tau$ one may suppose that the same collection $\mathfrak{F}_i$ of top level $i$ components of $z^*(0)$  in $\cY_i$ supports  the center $\cS_i$ of modification $s_{i+1}$ and the image of the center $\mathfrak{S}_i$ of $\nu_{i+1}$. Now each component $\tilde\cF_{i,j}\in\mathfrak{F}_i$ carries two distinct embedded affine lines  contained in  $\mathfrak{S}_i$ and $\cS_i$, respectively. By the relative Abhyankar-Moh-Suzuki Theorem (see Proposition I.4.14) 
there exists an automorphism $\eta_i\in\SAut_B(\cY_i)$ which sends the image of $\mathfrak{S}_{i}$ to $\cS_{i}$ and preserves the levels of the components of $z^*(0)$. Composing $\varphi_i$ and $\eta_i$ one may suppose that $\varphi_i(\mathfrak{S}_{i})=\cS_{i}$ while $\varphi_i$ is  still level-preserving. By Lemma I.1.5, $\varphi_i$ admits a lift to a level-preserving isomorphism $\varphi_{i+1}\colon V_{i+1}\stackrel{\cong_B}{\longrightarrow}\cY_{i+1}$ which fits in a commutative diagram
\be
 \bdi\la{diagr:blowups}
V_{i+1}&\rTo<{\varphi_{i+1}}>{\cong_B} &  \cY_{i+1} \\
\dTo<{\nu_{i+1}} &  & \dTo>{s_{i+1}} \\
V_i &\rTo>{\varphi_i}<{\cong_B} & \cY_i\, 
\edi
\ee
Passing from $V_i$ to $V_{i+1}$ increases by 1 the levels of the components of $z^*(0)$ which carry the center of the blowup while the levels of the remaining components do not change. The same is true for the passage from $\cY_i$ to $\cY_{i+1}$. It follows that $\varphi_{i+1}$ still preserves the levels. This concludes the  inductive step. 
\qed

\section{Cylinders over Danielewski-Fieseler surfaces}
In this section we restrict to the GDF surfaces with a unique special fiber. 
Our main result (Theorem \ref{thm: 8.main}) says that, up to isomorphism of cylinders over the base, 
it suffices to consider only the surfaces whose fiber tree of the unique special fiber is a bush.

\bdefi\la{def: one-fiber} We say that a marked GDF surface $\pi\colon X\to B$ 
over a smooth affine curve $B$ with a marking $z\in\cO_B(B)\setminus\{0\}$
is a \emph{marked Danielewski-Fieseler surface} if
\begin{itemize}\item $z^*(0)$ is a reduced divisor supported at a single point $b_0\in B$;
\item the restriction $\pi|_{X\setminus\pi^{-1}(b_0)}\colon X\setminus\pi^{-1}(b_0)\to B\setminus\{b_0\}$ 
is a trivial line bundle. \end{itemize} Under these assumptions there exists a trivializing sequence 
\eqref{eq: trivializing-seq-aff-modif} such that for any $i=1,\ldots,m-1$ the divisor of 
the fibered modification $\rho_{i+1}\colon X_{i+1}\to X_i$
is $z^{*}(0)$ on $X_i$.
\edefi

\brem\la{rem: one-fiber} The proofs of Propositions \ref{9.1} and \ref{9.2} go verbatim 
after replacing everywhere the Danielewski-Fieseler surfaces over the pair $(\A^1,0)$   by the marked Danielewski-Fieseler surfaces over the pair $(B,b_0)$ and replacing every affine space $\A^s$ by the product $B\times\A^{s-1}$. 
\erem

The main result in this section is the following.

\bthm\la{thm: 8.main} Let $\pi_X\colon X\to B$ be a marked Danielewski-Fieseler surface as in Definition  {\rm \ref{def: one-fiber}}. 
Then there is another marked Danielewski-Fieseler surface $\pi_Y\colon Y\to B$ with the same marking
such that \begin{itemize} \item 
the fiber tree $\Gamma_{b_0}(\pi_Y)$ is a bush with ${\rm tp}\,(\Gamma_{b_0}(\pi_Y))={\rm tp}\,(\Gamma_{b_0}(\pi_X))$;
\item there is an isomorphism over $B$ of cylinders  $\cX\stackrel{\cong_B}{\longrightarrow}\cY$ which sends the components of  $\pi_{\cX}^{-1}(b_0)$ to components of $\pi_{\cY}^{-1}(b_0)$ preserving the levels.
\end{itemize}
\ethm

\bproof Consider a trivializing sequence \eqref{eq: seq-aff-modif-final} for $X=X_m$ with $\Gamma_{b_0}(\pi_{X_i})=\Gamma_{\le i}$ where $\Gamma=\Gamma_{b_0}(\pi_X)$. There is a unique, up to isomorphism, bush ${\rm bh}(\Gamma)$ such that $\tp({\rm bh}(\Gamma))=\tp(\Gamma)$.

 If $m\le 1$ then $\Gamma$ is already a bush. Suppose further that $m\ge 2$. 
Assume by induction that the assertion holds for $X_{m-1}$, that is, there is  a marked Danielewski-Fieseler surface $\pi_{Y_{m-1}}\colon Y_{m-1}\to B$ sharing the same marking $z\in\cO_B(B)$ with $\pi_X\colon X\to B$ where $z^*(0)=b_0$ such that 
\begin{itemize}\item
$\Gamma_{b_0}(\pi_{Y_{m-1}})={\rm bh}(\Gamma_{\le m-1})$ \footnote{Notice that ${\rm bh}(\Gamma_{\le i})\neq ({\rm bh}(\Gamma))_{\le i}$ in general.} and \item there is an isomorphism  $\varphi_{m-1}\colon\cX_{m-1}\stackrel{\cong_B}{\longrightarrow}\cY_{m-1}$ preserving the levels of special fiber components. 
\end{itemize} 

Recall that $\rho_m\colon X_m \to X_{m-1}$ is a  fibered modification along the reduced divisor $z^*(0)=\sum_{i=1}^N F_i$ on $X_{m-1}$ with a reduced center $S=\sum_{i=1}^N S_i$ where for every $i=1,\ldots,N$ either $S_i= F_i$ or $S_i\subset F_i$ is nonempty and finite. Let $\mathfrak{F}$ be the collection
  of the fiber components $F_i$ in $X_{m-1}$ on the top level $m-1$ with finite $S_i$.
Consider the affine modification  $$\tilde\rho_m\colon\cX_m \to \cX_{m-1}, \quad \tilde\rho_m=\rho_m\times\id_{\A^1}\,,$$ along the reduced divisor  $z^{*}(0)=\sum_{i=1}^N \cF_i$ on $\cX_{m-1}$ where $\cF_i=F_i \times \A^1\cong\A^2$
with the reduced center $\cS =\sum_{i=1}^N\cS_i$ where $\cS_i = S_i \times \A^1$, $i=1,\ldots,N$. 

Let $z^{*}(0)=\sum_{i=1}^N \cF'_i$ on $\cY_{m-1}$ where $ \cF'_i=\varphi_{m-1}(\cF_i)$, and let $\cS_i'=\varphi_{m-1}(\cS_i)\subset\cF_i'$. 
Applying a suitable automorphism $\tau\in\SAut_B \cY_{m-1}$ one may suppose that 
\be\la{eq:AMS} \cS_i'=S_i' \times \A^1 \subset F_i'\times \{0\}\subset\cF_i'\quad\forall F_i\in \mathfrak{F}\,.\ee
 Indeed, choose for any $F_i\in \mathfrak{F}$ a point $s_i\in S_i$. Due to the relative Abhyankar-Moh-Suzuki Theorem (see Proposition I.4.14) 
one can rectify  the images $\varphi_{m-1}(\{s_i\}\times\A^1)\subset\cF'_i\cong\A^2$ simultaneously for all $F_i\in \mathfrak{F}$. Then $\tau\circ\varphi_{m-1}$ sends any parallel line 
$\{t_i\}\times\A^1\subset\cF_i=F_i\times\A^1$ where $t_i\in S_i\setminus\{s_i\}$ to an affine line, say, $\{t_i'\}\times\A^1$ parallel to (and disjoint with) the image $\{s_i'\}\times\A^1=\tau\circ\varphi_{m-1}(\{s_i\}\times\A^1)$. 

Assuming \eqref{eq:AMS} perform the fibered modification $\rho_m'\colon Y_{m}\to Y_{m-1}$ along the reduced divisor $z^*(0)$ on $Y_{m-1}$ with the reduced center  $\sum_{i=1}^N S_i'$. It results in a marked Danielewski-Fieseler surface $\pi_{Y_{m}}\colon Y_m\to B$. 
Consider the induced affine modification of cylinders
$$\tilde\rho_m'\colon\cY_m \to \cY_{m-1}, \quad \tilde\rho_m'=\rho'_m\times\id_{\A^1}\,,
$$ 
along the reduced divisor  $z^{*}(0)=\sum_{i=1}^N \cF'_i$ on $\cY_{m-1}$ with the reduced center $\cS' =\varphi_{m-1}(\cS_i)=\sum_{i=1}^N\cS_i'$ satisfying  \eqref{eq:AMS}.
Since $\varphi_{m-1}$ sends the divisor and the center of the affine modification $\cX_m \to \cX_{m-1}$ to the ones of $\cY_m \to \cY_{m-1}$ then by Lemma I.1.5,
$\varphi_{m-1}$ admits a lift to an isomorphism
$\varphi_{m}\colon\cX_m\stackrel{\cong_B}{\longrightarrow}\cY_m$ which automatically preserves the levels.

The fiber tree $\Gamma_{b_0} (\pi_{Y_m})$ is a spring bush  with $\tp(\tilde\Gamma)=\tp(\Gamma)$. Due to Proposition \ref{9.2} and Remark \ref{rem: one-fiber} there exists a marked 
Danielewski-Fieseler surface $\pi_Y\colon Y\to B$ sharing the same marking  $z$ such that 
\begin{itemize}\item $\Gamma_{b_0}(\pi_Y)={\rm bh}(\Gamma_{b_0} (\pi_{Y_m}))$ is a bush and
\item there is an isomorphism of cylinders $\cY\stackrel{\cong_B}{\longrightarrow}\cY_m$.
\end{itemize}
Since $\cX=\cX_m\cong_B\cY_m$ under $\phi_m$ it follows that
\begin{itemize}\item $\tp(\Gamma_{b_0}(\pi_Y))=\tp(\Gamma)$ and
\item there is an isomorphism of cylinders $\cX\stackrel{\cong_B}{\longrightarrow}\cY$,
\end{itemize} as required. 
\eproof

\section{Proof of the main theorem}\la{sec:proof}

\subsection{Invariance of the type up to a linear equivalence} \la{sec:proof-only-if}

The `only if' part of Theorem \ref{thm: main} follows from the next  proposition.

\bprop\label{prop: shifted-type} Consider two GDF surfaces $\pi_X\colon X\to B$ 
and $\pi_{Y}\colon Y\to B$ over the same base $B$. Assume that there is an isomorphism  over $B$ of cylinders
$\varphi\colon\cX\stackrel{\cong_B}{\longrightarrow}\cY$. Let $\tau\colon {\rm DF}(\pi_X)\stackrel{\cong_B}{\longrightarrow}{\rm DF}(\pi_Y)$ be the
induced isomorphism of the Danielewski-Fieseler quotients, 
 see Lemma {\rm \ref{lem:elem}}. 
Then the type divisors $\tp.\div(\pi_X)$ and $\tau^*(\tp.\div(\pi_Y))$ on ${\rm DF}(\pi_X)$ 
are linearly equivalent.
\eprop

\bproof Consider the birational morphisms 
$$\sigma_X\colon X\to X_0=B\times\A^1\quad\mbox{and}\quad \sigma_Y\colon Y\to Y_0=B\times\A^1\,$$ 
which are compositions of the contractions $\rho_i$  in \eqref{eq: trivializing-seq-aff-modif}. Consider also 
the induced birational morphisms of cylinders
\be\la{eq:contractions} \sigma_{\cX}\colon \cX\to B\times\A^2,\,\,\,\sigma_{\cX}=\sigma_X\times\id_{\A^1},\quad\mbox{and}\quad \sigma_{\cY}\colon \cY\to B\times\A^2,\,\,\,\sigma_{\cY}=\sigma_Y\times\id_{\A^1}\,.\ee
There is a birational map 
$\psi\colon  B\times\A^2\dashrightarrow B\times\A^2$ fitting 
in the commutative diagram 
\be\label{diagr: star}
 \bdi
 \cX&\rTo<{\phi}>{\cong_B} &  \cY\\
\dTo<{\sigma_{\cX}} &  & \dTo>{\sigma_{\cY}} \\
B\times\A^2 &\stackrel{\psi}{---->} &  B\times\A^2\,. 
\edi
\ee 
Fix a common marking $z\in\cO_B(B)\setminus\{0\}$ for $\pi_X\colon X\to B$ and $\pi_Y\colon Y\to B$ with $z^*(0)=b_1+\ldots+ b_n$. 
Letting $B^*=B\setminus\{b_1,\ldots,b_n\}\subset B$, $\psi$ restricts to an automorphism  over $B^*$ of $B^*\times\A^2$. Letting $\A^2=\Spec \mathbb{k}[u,v]$ the Jacobian $J(\psi)(b)$   of $\psi|_{\{b\}\times\A^2}\in\Aut\A^2$  is a rational function on $B$ without zeros and poles in $B^*$. 

\smallskip

\noindent {\bf Claim.} \emph{Consider a fiber component $\cF=F\times\A^1\subset \pi_{\cX}^{-1}(b_i)$ and its image $\cF'=\varphi(\cF)\subset\pi_{\cY}^{-1}(b_i)$. Let $l$ and $l'$ be the levels of $F$ and $F'$, respectively. Then one has 
\be\la{eq:order} {\ord}_{b_i} J(\psi)=l'-l\,.\ee This order does not depend on the choice of a component $\cF$ of $\pi_{\cX}^{-1}(b_i)$.}

\smallskip

\noindent \emph{Proof of the claim}.
Let  $\cP$ ($\cP'$, respectively)  be the path of length $l$ ($l'$, respectively) in $\Gamma_{b_i}(\pi_X)$ ($\Gamma_{b_i}(\pi_{Y})$, respectively) joining the leaf $\bar F$ ($\bar F'$, respectively) with the root. The chains $\cP$ and $\cP'$ are the dual graphs of the total transforms of $\{b_i\}\times\PP^1\subset\bar B\times\PP^1$ in \eqref{eq: trivializing-seq-blowups} under certain sequences of blowups with centers in some points $\alpha,\alpha'\in\{b_i\}\times\A^1$, respectively, and infinitely near points. 

Choose local coordinates 
$(z,u)$ near the fiber $\{b_i\}\times\A^1$ in $B_i\times\A^1$ so that $\alpha=(0,0)\in\{b_i\}\times\A^1$. The chain of affine modifications which corresponds to $\cP$ yields in suitable local coordinates the chain of extensions 
\be\la{eq:chain-of-extensions} \mathbb{k}[z,u]\subset \mathbb{k}[z,u/z]\subset \mathbb{k}[z,u/z^2]\subset\ldots\subset  \mathbb{k}[z,u/z^l]\,.\ee
Letting $B_i=B^*\cup\{b_i\}$ consider the standard neighborhoods $U_F\cong_{B_i} B_i\times\A^1$ of $F$ in $X$ and $U_{F'}\cong_{B_i} B_i\times\A^1$ of $F'$ in $Y$, see Proposition I.3.3, along with their cylinders
$$\cU_\cF=U_F\times \A^1\subset\cX\quad\mbox{and}\quad \cU_{\cF'}=U_{F'}\times \A^1\subset\cX'\quad\mbox{where}\quad \cU_\cF\cong_{B_i}\cU_{\cF'}\cong_{B_i} B_i\times\A^2\,.$$ 
Let $\Omega_i=B_i\times\A^2$ be the standard neighborhood of $\{b_i\}\times\A^2$ in $B_i\times\A^2$.
Due to \eqref{eq:chain-of-extensions} the restriction $\sigma_{\cX}|_{\cU_\cF}\colon\cU_\cF\to\Omega_i$ can be given in suitable local coordinates $(z,t,v)$ in $\cU_\cF$ near $\cF$ and $(z,u,v)$ in $\Omega_i$ near $\{b_i\}\times\A^2$ as
$$\sigma_{\cX}\colon (z,t,v)\mapsto (z,z^lt,v)\,,$$ cf.\ Corollary \ref{cor:loc-coord}(a).
Similarly, the restriction $\sigma_{\cY}|_{\cU_{\cF'}}\colon\cU_{\cF'}\to\Omega_{i}'$ can be given in suitable local coordinates as
$$\sigma_{\cY}\colon (z,t',v')\mapsto (z,z^{l'}t',v')\,.$$
 In these local coordinates one obtains
$$\det(d_{(t,v)}\sigma_{\cX})=z^{l}\quad\mbox{and}\quad\det(d_{(t',v')}\sigma_{\cY})=z^{l'}\,.$$ Since $\det(d_{(t,v)}\varphi)$ is an invertible function on $B$ one has $J(\psi)(z)\sim z^{l'-l}$ near $b_i$. This yields \eqref{eq:order}. Now the claim follows. 

\smallskip

Using \eqref{eq:order} one concludes that 
\be\la{eq: jac} \tau^*(\tp.\div(\pi_Y))-\tp.\div(\pi_X)=p^*(\div J(\psi))\,\ee where $p\colon {\rm DF}(\pi_X)\to B$ is the natural projection. Now the proof is completed.
 \eproof

The following corollary is immediate from \eqref{eq: jac}.

\bcor\la{cor:jac} Under the assumptions of Proposition {\rm\ref{prop: shifted-type}}, $\psi\in\Aut_B(B\times\A^2)$  if and only if 
$\tp.\div(\pi_X)=\tau^*(\tp.\div(\pi_Y))$.
\ecor

\subsection{Special isomorphisms and regularization}\label{ss:regularization}
In this subsection we finish the proof of Theorem \ref{thm: main}. 
We need the following notions. 

\bdefi[\emph{Special isomorphisms}]\la{def: special-isomorphism}
Given marked GDF surfaces $\pi_X\colon X\to B$ and $\pi_{Y}\colon Y\to B$ sharing a common marking $z\in\cO_B(B)\setminus\{0\}$ consider trivializing sequences \eqref{eq: triv-seq-cX-h} of cylinders along with the corresponding birational morphisms $\sigma_{\cX}$ and $\sigma_{\cY}$ as in \eqref{eq:contractions}. Given an isomorphism $\phi\colon\cX\stackrel{\cong_B}{\longrightarrow}\cY$ consider a  birational $B$-automorphism  $\psi=\sigma_{\cY}\circ\varphi\circ\sigma_{\cX}^{-1}$ of $B\times\A^2$ biregular off $z^{-1}(0)$ and fitting in diagram 
\eqref{diagr: star}.
 We say that $\phi$ is \emph{special} if $\psi\in\SAut_{B^*}(B^*\times\A^2)$ where $B^*=B\setminus z^{-1}(0)$ and \emph{sub-special} if $\psi\in\SAut_{B'}(B'\times\A^2)$ for some Zariski open dense subset $B'\subset B$.
\edefi

One has the following criterion for an isomorphism to be sub-special. 

\blem\la{lem: specialty} An isomorphism $\phi\colon\cX\stackrel{\cong_B}{\longrightarrow}\cY$ is sub-special if and only if
$\psi$ in {\rm (\ref{diagr: star})} verifies \be\la{eq: Jacobi} J(\psi)(b)={\rm Jac}(\psi|_{\{b\}\times\A^2})= 1\quad\forall b\in B^*\,.\ee If \eqref{eq: Jacobi} holds then one has a factorization
\be\la{eq: factoriz} \psi=\prod_{i=1}^{N}\exp(\p_i)\,\ee where $\p_1,\ldots,\p_N$ are locally nilpotent derivations of $\cO_{B'}(B')[u,v]$ for an open dense subset $B'\subset B$. 
\elem

\bproof
The `only if' part of the first assertion is well known, see, e.g., \cite[Lem.\ 4.10]{AFKKZ}. Assume further that $\psi$ satisfies (\ref{eq: Jacobi}). Let $\cB=\cO_{B^*}(B^*)$, and let $K=\Frac\cO_B(B)$ be the function field of $B$. Since $\psi\in\Aut_{\cB}\cB[u,v]$ and 
$J(\psi)=1$ one has  $\psi\in\SAut_{K}K[u,v]$, 
see \cite[Prop.\ 9]{FL} or \cite[Example 2.1]{Po}. Note that the base field $K$ in \cite{FL} and \cite{Po}  is supposed to be algebraically closed of characteristic zero. However, due to the van der Kulk version of the Jung Theorem (see, e.g., \cite{Wr}) the algebraic closeness assumption is superfluous.

 The group $\SAut_{K}K[u,v]$
is generated by the replicas of the locally nilpotent derivations $\p/\p u$ and $\p/\p v$ (\cite[Example 2.1]{Po}). Therefore, one has a decomposition 
\be\la{eq: psi-decomp} \psi=\prod_{i=1}^{m}\exp(f_i\p/\p u)\exp(g_i\p/\p v)=\prod_{j=1}^{N}\exp(\p_j),\quad N=2m\,,\ee where $f_i\in K[v]$, $g_i\in K[u]$, and the $\p_j$ are locally nilpotent $K$-derivations of $K[u,v]$, $i=1,\ldots,n,\,j=1,\ldots,N$. This yields (\ref{eq: factoriz}). 
\eproof

A regularization procedure described below allows to replace certain isomorphism of cylinders by special ones. 

\bprop\la{prop: jac} Under the assumptions of Proposition {\rm\ref{prop: shifted-type}} suppose in addition that $\tp.\div(\pi_X)=\tau^*(\tp.\div(\pi_{Y}))$. Then there exists 
a  special isomorphism $\tilde\phi\colon\cX\stackrel{\cong_B}{\longrightarrow}\cY$.
\eprop

\bproof 
Due to Corollary \ref{cor:jac}, $\psi$  in \eqref{diagr: star} is biregular, and so, the Jacobian $J(\psi)(b)$ is a non-vanishing regular function on $B$. Consider the automorphism $$\eta\in\Aut_B\cY,\qquad \eta\colon (y,v)\mapsto (y,v/J(\psi))\quad\mbox{for}\quad (y,v)\in \cY=Y\times\A^1\,.$$ Then $\hat\phi:=\eta\circ\phi\colon \cX\stackrel{\cong_B}{\longrightarrow}\cY$ and $\hat\psi:=\sigma_{\cY}\circ\hat\varphi\circ\sigma_{\cX}^{-1}\in\Aut (B\times\A^2)$ fit in \eqref{diagr: star}. One has $J(\hat \psi)\equiv 1$. By Lemma \ref{lem: specialty}, $\hat\phi$ is sub-special. To simplify the notation we will
suppose that $\phi$ is. 

According to Lemma \ref{lem: specialty}, $\psi$ can be factorized as in (\ref{eq: psi-decomp}).
For a natural $s\gg 1$
choose a function $h\in\cO_B(B)\setminus\{0\}$ such that 
\begin{itemize}
\item $hf_i\in\cO_{B^*}(B^*)[u]$ and $hg_i\in\cO_{B^*}(B^*)[v]$ where $f_i,g_i$ are as in \eqref{eq: psi-decomp}, $i=1,\ldots,m$;
\item $h-1$ vanishes to order $s+M$ at $b_1,\ldots,b_n$ where $M$ is the maximal order of pole at $b_1,\ldots,b_n$ of the coefficients of $f_i,g_i$, $i=1,\ldots,m$. 
\end{itemize}
Replacing the factors of $\psi$ in  (\ref{eq: psi-decomp}) by their $h$-replicas yields an automorphism  
\be\la{eq: fact-psi-h} \psi_h=\prod_{i=1}^{N}\exp(h\p_i)\in {\SAut}_{B^*}(B^*\times\A^2),\qquad N=2m\,.\ee

\smallskip

\noindent {\bf Claim.} \emph{Letting $\alpha=\psi_h\circ\psi^{-1}$ one has  $\alpha\equiv\id\mod z^{s}$ and $\alpha^{-1}\equiv\id\mod z^{s}$.}

\smallskip

\noindent\emph{Proof of the claim.} We prove the first congruence, the proof of the second one being similar. Let
$$\alpha_j=\left(\prod_{i=1}^{j}\exp(h\p_i)\right)\left(\prod_{i=1}^{j}\exp(\p_i)\right)^{-1}=\left(\prod_{i=1}^{j-1}\exp(h\p_i)\right)\exp((h-1)\p_j)\left(\prod_{i=1}^{j-1}\exp(\p_i)\right)^{-1}\,.$$
Since $(h-1)\p_j\equiv 0\mod z^s$ one has $$\exp((h-1)\p_j)-\id\equiv 0\mod z^s\,.$$ Hence $$\alpha_1\equiv\id\mod z^{s}\quad\mbox{and}\quad \alpha_j-\alpha_{j-1}\equiv 0\mod z^s\,.$$
By recursion one gets $\alpha=\alpha_N\equiv\id\mod z^{s}$, as needed.

\smallskip

Notice that $\alpha$ satisfies the conditions of Lemma I.1.6
with respect to the affine modification
$\sigma_{\cY}\colon \cY\to B\times\A^2$ along the divisor $(z^t)^*(0)$ where $t={\rm ht}(\cD(\pi_X))$. Indeed, $\alpha_*(z)=z$, and $\alpha\equiv\id\equiv\alpha^{-1}\mod z^{t\lfloor s/t\rfloor}$ where $s\gg t$. 
By Lemma I.1.6,
$\alpha\in \SAut_B(B\times\A^2)$ admits a lift $\tilde\alpha\in\Aut_B\cY$ such that $\tilde\alpha\equiv\id \mod z^{t\lfloor s/t\rfloor-t}$. Letting $\tilde\phi=\tilde\alpha\circ\phi\colon\cX\to\cY$ and $\tilde\psi=\alpha\circ\psi=\psi_h\in \SAut_{B^*}(B^*\times\A^2)$  yields a pair $(\tilde\phi, \tilde\psi)$ fitting in  \eqref{diagr: star}. So, the isomorphism $\tilde\phi\colon\cX\stackrel{\cong_B}{\longrightarrow}\cY$ is  special.
\eproof

\noindent \emph{Proof of the `if' part of Theorem {\rm \ref{thm: main}.}} It is worth to start by recalling the setup. We consider two marked GDF surfaces $\pi_X\colon X\to B$ and $\pi_Y\colon Y\to B$  over the same base $B$ and with the same marking $z\in\cO_B(B)\setminus\{0\}$ where $z^*(0)=b_1+\ldots+b_n$. We assume that there is an isomorphism
$\tau\colon {\rm DF}(\pi_X)\stackrel{\cong_B}{\longrightarrow}{\rm DF}(\pi_Y)$ such that $\tp.\div(\pi_X)\sim \tau^*(\tp.\div(\pi_Y))$ on ${\rm DF}(\pi_X)$. 
We must show that under these assumptions there exists an isomorphism $\varphi\colon\cX\stackrel{\cong_B}{\longrightarrow}\cY$.

By Lemma \ref{lem: reduction} one may suppose that 
\be\la{eq:tp-equal} \tp.\div(\pi_X)=\tau^*(\tp.\div(\pi_Y))\,.\ee 
We proceed by induction on the number $n=\card z^{-1}(0)$ of special fibers. 
If $n=1$ then $\pi_X\colon X\to B$ and $\pi_Y\colon Y\to B$ are marked Danielewski-Fieseler surfaces as in Definition \ref{def: one-fiber}. By Theorem \ref{thm: 8.main} there exist two other marked Danielewski-Fieseler surfaces $\pi_{X'}\colon X'\to B$ and $\pi_{Y'}\colon Y'\to B$ with the same marking such that
\begin{itemize}
\item there are isomorphisms  $\cX'\cong_B\cX$ and $\cY'\cong_B\cY$, and 
\item the fiber trees
$\Gamma_{b_1}(\pi_{X'})$ and $\Gamma_{b_1}(\pi_{Y'})$ are bushes with $$\tp(\Gamma_{b_1}(\pi_{X'}))=\tp(\Gamma_{b_1}(\pi_{X}))\quad\mbox{and}\quad \tp(\Gamma_{b_1}(\pi_{Y'}))=\tp(\Gamma_{b_1}(\pi_{Y}))\,.$$
\end{itemize} 
Then \eqref{eq:tp-equal} implies 
$$\tp(\Gamma_{b_1}(\pi_{X'}))=\tp(\Gamma_{b_1}(\pi_{Y'}))\,.$$ 
Therefore, the bushes $\Gamma_{b_1}(\pi_{X'})$ and $\Gamma_{b_1}(\pi_{Y'})$ are isomorphic. By Theorem I.5.7 
there is an isomorphism of cylinders $\cX'\cong_B\cY'$, hence also an isomorphism $\varphi\colon\cX\stackrel{\cong_B}{\longrightarrow}\cY$. This proves the assertion for $n=1$.

Suppose the assertion holds if $\card z^{-1}(0)\le n-1$. Consider further
 the case  $\card z^{-1}(0)=n$. Let $$B_0=B\setminus \{b_2,\ldots,b_n\}\quad\mbox{and}\quad B_1=B\setminus\{b_1\}\quad\mbox{so that}\quad  B_0\cup B_1=B\quad\mbox{and}\quad B_0\cap B_1=B^*\,.$$ Let $\pi_{X_0}\colon X_0\to B_0$ and $\pi_{X_1}\colon X_1\to B_1$ be the restrictions of $\pi_X$ and $\pi_Y$ over $B_0$ and $B_1$, respectively. Define in a similar way $\pi_{Y_0}\colon Y_0\to B_0$ and $\pi_{Y_1}\colon Y_1\to B_1$. By the inductive hypothesis there are commutative diagrams (cf.\ \eqref{diagr: star})
 \be\label{diagr: star-two}
 \bdi
 \cX_0&\rTo<{\phi_0}>{\cong_{B_0}} &  \cY_0\\
\dTo<{\sigma_{\cX}|_{\cX_0}} &  & \dTo>{\sigma_{\cY}|_{\cY_0}} \\
B_0\times\A^2 &\stackrel{\psi_0}{---->} &  B_0\times\A^2\, 
\edi
\qquad\mbox{and}\qquad
 \bdi
 \cX_1&\rTo<{\phi_1}>{\cong_{B_1}}  &  \cY_1\\
\dTo<{\sigma_{\cX}|_{\cX_1}} &  & \dTo>{\sigma_{\cY}|_{\cY_1}} \\
B_1\times\A^2 &\stackrel{\psi_1}{---->} &  B_1\times\A^2\, 
\edi
\ee 
By virtue of Proposition \ref{prop: jac} one may suppose that  both $\varphi_0$ and $\varphi_1$ in \eqref{diagr: star-two} are special, see Definition \ref{def: special-isomorphism}.
To cook up an isomorphism $\varphi$ over $B$ using $\varphi_0$ and $\varphi_1$ we apply the same kind of regularization as in the proof of Proposition \ref{prop: jac}. 

\medskip

\noindent {\bf Claim 1}. 
\emph{Given $s\gg 1$ there exist $\psi'_0,\psi'_1\in\SAut_{B}(B\times\A^2)$ satisfying 
\begin{itemize}
\item[{\rm (i)}] $\psi_0'\equiv\psi_0\mod z^s$ near $\{b_1\}\times\A^2$ and $\psi'_0\equiv\id\mod z^s$ near $\{b_i\}\times\A^2$, $i=2,\ldots,n$; 
\item[{\rm (ii)}] $\psi'_1\equiv\id\mod z^s$ near $\{b_1\}\times\A^2$ and $\psi'_1\equiv\psi_1\mod z^s$ near $\{b_i\}\times\A^2$, $i=2,\ldots,n$.
\end{itemize}}

\medskip

\noindent \emph{Proof of Claim $1$}. It suffices to prove (i), the proof of (ii) being similar. Likewise in the proof of Proposition \ref{prop: jac} we replace the factors $\exp(\p_j)$ in the factorization 
$\psi_0=\prod_{j}\exp(\p_j)\in{\SAut}_{B^*}(B^*\times\A^2)$ as in (\ref{eq: fact-psi-h}) by their replicas $\exp(h\p_j)$ where $h\in\cO_B(B)$ satisfies
\begin{itemize}\item $h-1$ vanishes to  order $s'\gg s$ at $b_1$;
\item $h$ vanishes to order $s$ at $b_2,\ldots,b_n$. 
\end{itemize}
This gives an automorphism $\psi_0'=\prod_{j}\exp(h\p_j)\in\SAut_{B}(B\times\A^2)$ 
verifying (i), as desired.  

\medskip

The following claim ends the proof of the theorem. 

\medskip

\noindent {\bf Claim 2}. 
\emph{The composition $\psi=\psi_0'\circ\psi_1'\in\SAut_{B}(B\times\A^2)$ admits a lift to an isomorphism $\varphi\colon\cX\stackrel{\cong_B}{\longrightarrow}\cY$. 
}

\medskip

\noindent \emph{Proof of Claim $2$}. The affine modifications $\sigma_\cX\colon \cX\to B\times\A^2$ and $\sigma_\cY\colon \cY\to B\times\A^2$ along the divisors $z^*(0)$ on $\cX$ and $\cY$, respectively, have for their centers certain subschemas $\cS_{\cX},\,\cS_{\cY}$ of the $r$th infinitesimal neighborhood of the divisor $z^{-1}(0)$ on $B\times\A^2$ where $r\ge 1$ is such that $z^r$ belongs to the corresponding defining ideals $I(\cS_\cX), I(\cS_\cY)\subset\cO_{B\times\A^2}(B\times\A^2)$. By Lemma I.1.5, $\psi$ can be lifted to an isomorphism $\varphi\colon\cX\stackrel{\cong_B}{\longrightarrow}\cY$ provided $\psi^*$ sends $I(\cS_\cY)$ to $I(\cS_\cX)$, or, which is equivalent, if $\psi(\cS_{\cX})=\cS_{\cY}$. 

For  $i=0,1$ the centers $\cS_{\cX_i}$ and $\cS_{\cY_i}$ of the affine modifications $\sigma_{\cX}|_{\cX_i}$ and $\sigma_{\cY}|_{\cY_i}$ coincide with the restrictions $\cS_{\cX}|_{\cX_i}$ and  $\cS_{\cY}|_{\cY_i}$, respectively. Since 
$\psi_i$ admits a lift $\varphi_i$ (see \eqref{diagr: star-two}) one has $\psi_i(\cS_{\cX_i})=\cS_{\cY_i}$, $i=0,1$. 

Due to (i) and (ii) for $s\ge r$ one has $\psi|_{B_i\times\A^2}\equiv \psi_i|_{B_i\times\A^2}\mod z^r$ for $i=0,1$, that is, these automorphisms coincide in the $r$th infinitesimal neighborhood of $z^{-1}(0)$ in $B_i\times\A^2$. 
It follows that $\psi(\cS_{\cX_i})=\cS_{\cY_i}$, $i=0,1$. Finally one has $\psi(\cS_{\cX})=\cS_{\cY}$, as required. \qed

\medskip

\section{On moduli spaces of GDF surfaces} \la{sec:deformations}

We conclude the paper by constructions of a versal deformation family and an affine coarse moduli space of marked GDF surfaces with a given marking, a given cylinder, and a given graph divisor.

\subsection{Coarse moduli spaces of GDF surfaces}\la{ss:coarse}
\begin{defi}\label{def:deform-family} Consider a marked GDF surface $\pi_X\colon X\to B$ with a marking $z\in\cO_B(B)\setminus\{0\}$, a trivializing sequence \eqref{eq: trivializing-seq-aff-modif}, and a graph divisor $\cD=\cD(\pi_X)$. 
By a \emph{family of GDF surfaces of type $(B,z,\cD)$} we mean a pair of smooth morphisms of quasiprojective schemes $\mathfrak{X}\to \mathcal{S}$ and $\pi_{\mathfrak{X}}\colon \mathfrak{X}\to B$ such that 
\begin{itemize}
\item  for each  point $\mathfrak{s}\in\mathcal{S}$ the fiber $X(\mathfrak{s})$ of $\mathfrak{X}\to \mathcal{S}$  over $\mathfrak{s}$ is reduced; 
\item  the specialization $\pi(\mathfrak{s})\colon X(\mathfrak{s})\to B$ over $\mathfrak{s}$ is a marked GDF surface with the marking $z$ and the graph divisor $\cD$;
\item there is a point $\mathfrak{s}_0\in \mathcal{S}$ such that the specialization over $\mathfrak{s}_0$ yields the initial marked GDF surface $\pi_{X}\colon X\to B$.   
\end{itemize}

We say that $\mathfrak{X}\to \mathcal{S}$ is an (\emph{affine}) \emph{deformation family of marked GDF surfaces} if both $\mathfrak{X}$ and $\mathcal{S}$ are smooth (affine) varieties and the morphism $\mathfrak{X}\to\mathcal{S}$ is a submersion which extends to a proper deformation family of SNC completions of GDF surfaces over $\bar B$ such that the corresponding family of extended divisors $(D_{\rm ext}(\mathfrak{s}))_{\mathfrak{s}\in\mathcal{S}}$  over $\mathcal{S}$ is locally trivial. This yields a locally trivial family of graph divisors $(\cD(\mathfrak{s})=\cD(\pi_{X(\mathfrak{s})}))_{\mathfrak{s}\in\mathcal{S}}$. The monodromy group of the latter family is a subgroup of the finite group $$\Aut_B(\cD(\mathfrak{s}_0))=\prod_{i=1}^n \Aut(\Gamma_i(\mathfrak{s}_0))\,.$$ We say that the family of graph divisors $(\cD(\mathfrak{s}))_{\mathfrak{s}\in\mathcal{S}}$ is \emph{trivial} if its monodromy group is.
\end{defi}

The Grothendieck theory of moduli spaces (\cite{Gro}) and its versions deal with proper schemes, or pairs of proper schemes. In our particular non-proper setting we adopt the following simplified definition.

\begin{defi}\la{def:coarse-moduli} Consider a triplet $(B,z,\cD)$ where $B$ is a smooth affine curve, $z\in\cO_B(B)\setminus\{0\}$ is a marking, and $\cD=\sum_{i=1}^n \Gamma_ib_i$ is a graph divisor with trees $\Gamma_1,\ldots,\Gamma_n$ supported by $z^{-1}(0)=\{b_1,\ldots,b_n\}\subset B$. A scheme $\mathcal{C}(B,z, \cD)$ will be called a \emph{coarse moduli space of GDF surfaces of type $(B,z,\cD)$} if the following hold.
\begin{itemize}\item To any marked GDF surface $\pi_X\colon X\to B$ of type $(B,z,\cD)$ there corresponds a unique point $\mathfrak{c}(\pi_X)\in\mathcal{C}(B,z, \cD)$, and vice versa, any point $\mathfrak{c}\in\mathcal{C}(B,z, \cD)$ corresponds to a unique, up to an isomorphism over $B$, marked GDF surface $\pi_{X(\mathfrak{c})}\colon X(\mathfrak{c})\to B$ of type $(B,z,\cD)$;
\item for any deformation family of GDF surfaces
$\mathfrak{X}\to\mathcal{S}$ of type $(B,z,\cD)$ the correspondence $\mathcal{S}\to \mathcal{C}(B,z, \cD)$ sending a point $\mathfrak{s}\in\mathcal{S}$ to $\mathfrak{c}(\pi_{X(\mathfrak{s})})\in\mathcal{C}(B,z, \cD)$ is a morphism. 
\end{itemize}
\end{defi}

Under certain restrictions there exists a coarse moduli space of GDF surfaces.

\bthm\la{thm:moduli} Consider a triplet $(B,z,\cD)$ as in Definition {\rm \ref{def:coarse-moduli}}.
Suppose that $\cO_B(B)^\times = \mathbb{k}^*$, that is, $B$ does not admit any non-constant invertible regular function. Then the following hold.
\begin{itemize}
\item[{\rm (a)}] 
There exists a coarse moduli space $\mathcal{C}(B,z, \cD)$ of marked GDF surfaces of type $(B,z,\cD)$. This space $\mathcal{C}(B,z, \cD)$ is an affine  variety with at most quotient singularities.
\item[{\rm (b)}] 
The cylinders over any two surfaces in $\mathcal{C}(B,z, \cD)$ are isomorphic over $B$. 
\item[{\rm (c)}] If $\pi\colon X\to B$ is a marked GDF surface of type $(B,z,\cD)$ and $X$ is not a Zariski $1$-factor, that is, $ \cD(\pi_X)\cong\cD$ is not a chain divisor then there exists a sequence of marked GDF surfaces $\pi_{X^{(k)}}\colon X^{(k)}\to B$ with the given marking $z$ and the cylinders isomorphic over $B$ to $X\times\A^1$ 
such that
$$
\dim \,\mathcal{C}(B,z, \cD(\pi_{X^{(k)}}))\to\infty\quad\mbox{when}\quad k\to\infty\,.$$ 
\end{itemize}
\ethm

The proof of Theorem \ref{thm:moduli} is  done at the end of the section. 

\subsection{The automorphism group of a GDF surface}\la{ss:Aut}

\begin{nota}\label{nota:Ru} Given an $\A^1$-fibered surface $\pi_X\colon X\to B$  we let $\mathcal{U}(\pi_X)=R_u(\Aut_B(X))$ be the (normal) subgroup of $\Aut_B(X)$ generated by all the $\mathbb{G}_a$-actions on $X$ along the fibers of $\pi_X$. For a GDF surface $\pi_X\colon X\to B$ with a trivializing sequence \eqref{eq: trivializing-seq-aff-modif} we let $\mathcal{U}_l=\mathcal{U}(\pi_{X_l})$, $l=1,\ldots,m$ where $\mathcal{U}_m=\mathcal{U}(\pi_X)$.
\end{nota}

The next proposition can be deduced from   Theorems 6.3 and 8.24 in \cite{KPZ} and their corollaries. However, in our particular case we prefer to give a simple direct argument.

\begin{prop}\label{prop:aut-GDF} Assume that $\cO_B(B)^\times=\mathbb{k}^*$. Given a  GDF surface  $\pi_X\colon X\to B$ with a trivializing sequence \eqref{eq: trivializing-seq-aff-modif} the following hold.
\begin{itemize}
\item[{\rm (a)}]
There are natural inclusions \be\la{eq:nat-incl}
\Aut_B(X)=\Aut_B(X_m)\subset\Aut_B(X_{m-1})\subset\ldots\subset\Aut_B(X_0)=\Aut_B(B\times\A^1)\,.\ee

\item[{\rm (b)}] One has
$\Aut_B(B\times\A^1)=\mathbb{U}_0\rtimes\mathbb{G}_m$ where $\mathbb{U}_0\cong\cO_B(B)$ viewed as a vector group. 

\item[{\rm (c)}] Let $m_{i,l}=\h((\Gamma_i)_{\le l})$, $i=1,\ldots,n$. For any $l=0,\ldots,m$ one has $$\mathbb{U}_l=\mathbb{U}_0\cap \Aut_B(X_l)\cong H^0(B,-D_l)\quad\mbox{where}\quad D_l=\sum_{i=1}^n m_{i,l}b_i\,.$$

\item[{\rm (d)}] 
Let $C_{l-1}\subset X_{l-1}$ be the center of the fibered modification $\rho_l\colon X_l\to X_{l-1}$ where $l\in\{1,\ldots,m\}$. Then the subgroup $\mathbb{U}_{l}\subset\mathbb{U}_{l-1}$ acts identically on $C_{l-1}$.

\item[{\rm (e)}] If $\pi_X\colon X\to B$ has a reducible fiber then $\Aut_B(X_m)\cong \mathbb{U}_m\rtimes\mu_d$ where $\mu_d\subset\mathbb{G}_m$ is a finite cyclic group. 
\end{itemize}
\end{prop}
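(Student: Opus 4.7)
The plan is to prove the parts in order using the trivializing sequence and a local coordinate computation; (a) and (b) are foundational, (c) and (d) follow from a direct analysis near each $b_i$, and (e) requires an additional argument on the $\mathbb{G}_m$-image that I expect to be the main obstacle. For (a), I show that any $\phi\in\Aut_B(X_l)$ restricts to an automorphism of the open subset $X_l\setminus E_l = X_{l-1}\setminus C_{l-1}$, where $E_l$ is the exceptional divisor of $\rho_l$. Since $X_{l-1}$ is smooth affine and $C_{l-1}$ is a finite set of points (codimension $2$), by normality every regular function on $X_{l-1}\setminus C_{l-1}$ extends to a regular function on $X_{l-1}$, so this restricted automorphism extends uniquely to an automorphism of $X_{l-1}$ over $B$. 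Injectivity is immediate since an automorphism that is the identity on the dense open set $X_l\setminus E_l$ must be the identity on $X_l$. Iterating yields (a). For (b), $\Aut_B(B\times\A^1)$ consists of affine maps $(b,u)\mapsto(b,a(b)u+c(b))$ with $a\in\cO_B(B)^\times$ and $c\in\cO_B(B)$, and the assumption $\cO_B(B)^\times=\mathbb{k}^*$ forces $a$ to be a nonzero constant, giving the semidirect decomposition.

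For (c) and (d), I work in the local charts supplied by the proof of Proposition \ref{9.1} and Corollary \ref{cor:loc-coord}. Near $b_i$ the top-level fiber component of $X_l$ carries a local coordinate that coincides, up to an invertible factor, with $(u-\alpha)/z^{m_{i,l}}$ for a suitable $\alpha$. A translation $u\mapsto u+c(z)$ therefore pulls back to this coordinate by adding $c(z)/z^{m_{i,l}}$, which is regular at $b_i$ iff $\ord_{b_i}(c)\geq m_{i,l}$. Regularity on lower-level components at $b_i$ is then automatic, so combining over all $i$ gives $\mathbb{U}_l=H^0(B,-D_l)$, proving (c). For (d), the induced action of $c\in\mathbb{U}_{l-1}$ on a level-$(l-1)$ fiber component over $b_i$ is translation by $[c/z^{l-1}](b_i)$, which vanishes exactly when $\ord_{b_i}(c)\geq m_{i,l}$. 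Since $C_{l-1}$ lies on such components and a nontrivial translation of $\A^1$ has no fixed points, acting identically on $C_{l-1}$ is equivalent to $c\in\mathbb{U}_l$.

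For (e), the composition $\Aut_B(X_m)\hookrightarrow\Aut_B(X_0)=\mathbb{U}_0\rtimes\mathbb{G}_m\twoheadrightarrow\mathbb{G}_m$ is a homomorphism with kernel $\mathbb{U}_m$; let $G$ denote its image. The main obstacle is showing $G$ is finite. The reducibility hypothesis forces some $\Gamma_i$ to have at least two leaves, hence to possess a branching vertex at some level $l$ with at least two children at level $l+1$. The corresponding level-$l$ fiber component of $X_l$ at $b_i$ then contains at least two distinct points of the center $C_l$, and any $(c,a)\in\Aut_B(X_m)$ induces an affine self-map $v\mapsto av+\gamma$ of this component permuting these points. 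Since an affine map of $\A^1$ is determined by its action on two points, the pair $(a,\gamma)$ lies in a finite subgroup of the affine group; projecting to the $a$-coordinate forces $G$ to be finite, hence $G=\mu_d$ as a subgroup of $\mathbb{G}_m$. The extension $1\to\mathbb{U}_m\to\Aut_B(X_m)\to\mu_d\to 1$ splits because $\mathbb{U}_m$ is a $\mathbb{k}$-vector space and $\mu_d$ is finite, so $H^2(\mu_d,\mathbb{U}_m)=0$ in characteristic zero, yielding the semidirect product.
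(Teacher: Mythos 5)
Your parts (b), (c) and (d) are essentially sound and track the paper's arguments; your direct computation in (d) (translation by the leading coefficient of $c/z^{m_{i,l-1}}$) is a reasonable substitute for the paper's connectedness argument, and the splitting in (e) via $H^2(\mu_d,\mathbb{U}_m)=0$ is a clean alternative to the paper's explicit splitting through the $\G_m$-action on the line bundle $X_{l-1}$. The problem is part (a), on which (e) also depends since it supplies the homomorphism $\Aut_B(X_m)\to\G_m$. The identification $X_l\setminus E_l=X_{l-1}\setminus C_{l-1}$ is false: $\rho_l$ is an \emph{affine} modification, i.e.\ the blowup of $C_{l-1}$ followed by the removal of the strict transform of the divisor $z^*(0)$ (the modified fiber components). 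Hence deleting the exceptional divisor from $X_l$ leaves a copy of $X_{l-1}$ minus a \emph{curve}, not minus the finite set $C_{l-1}$; the induced birational self-map of $X_{l-1}$ is only defined off a divisor, and normality/Hartogs gives no extension across a divisor. Moreover, even to restrict $\phi$ to $X_l\setminus E_l$ you need $\phi(E_l)=E_l$, i.e.\ that a $B$-automorphism preserves the set of top-level fiber components; in the paper this invariance is a \emph{consequence} of (a) (it is used this way in the proof of (d)), not an input.

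The missing ingredient is exactly what the paper supplies through Corollary \ref{cor:jac} and the Claim in the proof of Proposition \ref{prop: shifted-type}: for the birational map $\psi$ of $X_0=B\times\A^1$ induced by $\phi$, the order of $J(\psi)$ at $b_i$ equals $l'-l$ for the levels of any fiber component and of its image, independently of the component chosen; comparing the root (level $0$) with a top-level component forces this order to be $0$, so $J(\psi)\in\cO_B(B)^\times=\mathbb{k}^*$, $\psi$ is biregular, and levels are preserved. Without this (or an equivalent level-preservation argument) the inclusions \eqref{eq:nat-incl} are not established. A smaller, fixable point in (e): a given element of $\Aut_B(X_m)$ need not map your chosen branching component $F$ to itself, so ``permuting these points'' is not quite right; one should pass to the finite-index stabilizer of $F$, or observe that the kernel of the action on the finite invariant set $C_l$ already maps to $1\in\G_m$ (the paper's own proof glosses over the same point).
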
 

\begin{proof} (a) By Corollary \ref{cor:jac} for any $l=0,\ldots,m$ one has a natural inclusion $\Aut_B(X_l)\subset\Aut_B(X_{0})$. Now, $\phi\in\Aut_B(X_{l-1})$ admits a lift to $\tilde\phi\in\Aut_B(X_{l})$ if and only if $\phi$ preserves the ideal of the center of the affine modification $\rho_l\colon X_l\to X_{l-1}$. This, clearly, leads to the inclusions $\Aut_B(X_l)\subset\Aut_B(X_{l-1})$. 

Statement
(b) is immediate from the fact that any $\phi\in\Aut_B(B\times\A^1)$ acts via
$$\phi\colon (b,u)\mapsto (b,\,\alpha(b)u+\beta(b))\quad\mbox{where}\quad \alpha\in\cO_B(B)^\times=\mathbb{k}^*\quad\mbox{and}\quad \beta\in\cO_B(B)\,.$$ 

The first equality in (c) is easy and is left to the reader. The second one follows from \cite[Thm.\ 6.3]{KPZ} (cf.\ also  \cite[Rem.\ 6.5.2]{KPZ}) due to the fact that locally in $B$ near the point $b_i$,
a locally nilpotent vertical vector field $\p$ on $B\times\A^1$ admits a lift to $X_l$ 
if and only if $\p$ is of the form $\p=f(z)z^{m_i,l}\p/\p u$ for a function $f$ in the local ring of $(B,b_i)$, cf.\ the proof of Lemma I.3.1. 

(d) The inclusion $\Aut_B(X_{l})\subset\Aut_B(X_{l-1})$ in \eqref{eq:nat-incl} implies that the union of the top level $l$ fiber components in $X_l$ is $\Aut_B(X_{l})$-invariant. Using \cite[Prop.\ 2.1]{KZ} one can conclude that
 $$\Aut_B(X_{l})=\Aut_B(X_{l-1},\,C_{l-1})\,.$$
 Since the ind-subgroup $\mathbb{U}_{l}\subset\Aut_B(X_{l})$ is connected and acts morphically on $X_{l-1}$, its action on the finite $\Aut_B(X_{l})$-invariant set
$C_{l-1}\subset X_{l-1}$ is identical. 

(e)
From (a)--(c) one obtains the inclusions 
\begin{equation}\label{eq:inclusions}
\Aut_B(X_l)/\mathbb{U}_l\subset\Aut_B(X_{l-1})/\mathbb{U}_{l-1}\subset\ldots\subset\Aut_B(X_{0})/\mathbb{U}_{0}=\mathbb{G}_m,\quad l=0,\ldots,m\,.\end{equation} 
Choose $l\in\{1,\ldots,m\}$ such that $\pi_{X_l}\colon X_l\to B$ has a reducible fiber while $\pi_{X_{l-1}}\colon X_{l-1}\to B$ does not. 
Then $\pi_{X_{l-1}}\colon X_{l-1}\to B$ admits a structure of a line bundle. Consider the associate $\mathbb{G}_m$-action on $X_{l-1}$ along the fibers of 
$\pi_{X_{l-1}}$. Due to \eqref{eq:inclusions} one has $\Aut_B(X_{l-1})=\mathbb{U}_{l-1}\rtimes\mathbb{G}_m$, and due to \eqref{eq:nat-incl},
$$\mathbb{U}_{l}\subset\Aut_B(X_{l})\subset\mathbb{U}_{l}\rtimes\mathbb{G}_m\,.$$
Let  $\pi_{X_{l}}^{-1}(b_i)$ be a reducible fiber. Then  the fiber $F=\pi_{X_{l-1}}(b_i)\cong\A^1$ contains at least two distinct points of $C_{l-1}$. Let $\phi$ be an element of the $\mathbb{G}_m$-action on $X_{l-1}$ along the fibers of $\pi_{X_{l-1}}$  which preserves the finite set $F\cap C_{l-1}$. Then $\phi$ has finite order, say, $d$ where $d\le {\rm card}\,(F\cap C_{l-1})$. Since $\phi^d$ is identical  on $F$ then also $\phi^d=\id$ on $X_{l-1}$. This shows that $\Aut_B(X_{l})=\mathbb{U}_{l}\rtimes\mu_d\,$ where $\mu_d\subset\mathbb{G}_m$ is a finite cyclic subgroup. A similar semi-direct product decomposition holds for any subgroup $\Aut_B(X_{i})\subset\Aut_B(X_{l})$, $i=l,\ldots,m$, see (c) and \eqref{eq:nat-incl}.
\end{proof}

\subsection{Configuration spaces and configuration invariants}\label{ss:config-space}

\bnota\label{nota:nat-aff-coord}
Consider  a marked GDF surface  $\pi_X\colon X\to B$ with a marking $z\in\cO_B(B)$, a trivializing sequence \eqref{eq: trivializing-seq-aff-modif}, and a graph divisor $\cD(\pi_X)=\sum_{i=1}^n \Gamma_ib_i$ where $z^*(0)=b_1+\ldots+b_n$. For a vertex $v\in{\rm vert}\,(\Gamma_i)$ on level $l>0$ we let $F(v)\subset X_l$ be the corresponding top level fiber component over $b_i$, and let $(b_i, u(v))\in B\times\A^1$ be the image  of $F(v)$ under the contraction $\rho_1\circ\ldots\circ\rho_l\colon X_l\to X_0=B\times\A^1$.  
Fixing the shortest path $\gamma(v)$ joining $v$ in $\Gamma_i$ with the root $v_{0,i}$ of $\Gamma_i$ one can construct the standard affine chart about $F(v)$ in $X_l$ following the blowup process along the path $\gamma(v)$. Starting with $t_{v_{i,0}}=u$, where $v_{i,0}$ is the root of $ \Gamma_i$, assume by recursion that $t_v$ is already defined for a (non-extremal) vertex $v$ of $ \Gamma_i$ on a level $l$, and let $w$ be a vertex on the level $l+1$ joint to $v$ by an edge. Then we let  \be\la{eq:coord} t_{w}=(t_v-t_v(w))/z\ee where $t_v(w)$ is the $t_v$-coordinate of the point $\rho_{l+1}(F(w))\in F(v)$.
 By recursion one defines  a system of affine coordinates on the fiber components of $z^{-1}(0)$ in $X_l$ for all $l=0,\ldots,m$.  If $X_l$ admits a $\mu_d$-action  along the fibers of $\pi_{X_l}\colon X_l\to B$ then this system of affine coordinates is $\mu_d$-invariant, cf. Lemma I.3.5 and its proof. 
\enota

\bdefi[\emph{Configuration space of a tree}]\label{def:config-tree}
Let $\Gamma$ be a rooted tree. For a vertex $v\in {\rm vert}\,(\Gamma)$ on level $l$ we let $r_+(v)$ be the number of  incident edges joining $v$ with vertices on level $l+1$. If $\h(\Gamma)>0$ then $r_+(v)=0$ if and only if $v$ is an extremal vertex different from the root $v_0$ of $\Gamma$.  Thus, $\sum_{v\in {\rm vert}\,(\Gamma)} r_+(v)$ equals the number of edges of $\Gamma$. We let  $\Gamma^*$ stand for the subgraph of $\Gamma$ obtained by deleting all the leaves of $\Gamma$ and their incident edges.

Let $D_r\subset\A^r$ be the zero level of the discriminant of the universal monic polynomial $p\in\mathbb{k}[t]$ of degree $r>0$. The open set $\mathcal{S}(r)=\A^r\setminus D_r\subset\A^r$ represents the configuration space of $r$-points subsets  (the roots of $p$) of $\A^1=\Spec\mathbb{k}[t]$. The \emph{configuration space of $\Gamma$} is the formal sum $$\mathcal{S}(\Gamma)=\sum_{v\in {\rm vert}\,(\Gamma^*)} \mathcal{S}(r_+(v)) v\,.$$ 
A point $\mathfrak{s}\in\mathcal{S}(\Gamma)$ can be viewed as a collection of configurations $$\mathfrak{s}=\{\mathfrak{s}(v)\in\mathcal{S}(r_+(v))\,|\,v\in {\rm vert}\,(\Gamma^*)\}\,.$$ The underlying variety of $\mathcal{S}(\Gamma)$ is the smooth affine variety $\prod_{v\in {\rm vert}\,(\Gamma^*)} \mathcal{S}(r_+(v))$ of dimension $$\dim \mathcal{S}(\Gamma)=\sum_{v\in {\rm vert}\,(\Gamma^*)} r_+(v)={\rm card}\,({\rm edges}\,(\Gamma))\,.$$ 
The automorphism group $\Aut(\Gamma)$ of the rooted tree $\Gamma$ acts on $\mathcal{S}(\Gamma)$ via
$$\Aut(\Gamma)\ni\alpha\colon \mathfrak{s}\mapsto \alpha_*(\mathfrak{s})\quad\mbox{where}\quad \alpha_*(\mathfrak{s})(v)=\mathfrak{s}(\alpha(v))\,.$$
This action is not effective, in general. Its kernel of non-effectiveness, say, $K$ is the pointwise stabilizer of $\Gamma^*$: $$K=\{\alpha\in\Aut(\Gamma)\,|\,
\alpha(v)=v\,\,\,\forall v\in {\rm vert}\,(\Gamma^*)\}\,.$$ The quotient $$\Aut^*(\Gamma)=\Aut(\Gamma)/K$$ acts effectively on $\mathcal{S}(\Gamma)$. 

Consider  the level $l$ factor $\mathcal{S}_l(\Gamma)$ of $\mathcal{S}(\Gamma)$ where
$$\mathcal{S}_l(\Gamma)=\sum_{v\in {\rm vert}\,(\Gamma^*)\,|\,l(v)=l} \mathcal{S}(r_+(v)) v\,.$$ 
There is an effective action of the  Abelian unipotent group $\mathbb{G}_a^h$ on $\mathcal{S}(\Gamma)$ where $h=\h(\Gamma)$. It is defined as follows. For $l=0,\ldots,m-1$ the $l$th $\mathbb{G}_a$-factor  of $\mathbb{G}_a^h$ acts identically on any component $\mathcal{S}(r_+(v))v$ with $l(v)\neq l$, and acts effectively on $\mathcal{S}_l(\Gamma)$
 via the  simultaneous shifts  on $\tau\in\mathbb{k}$ of the affine coordinates in all the corresponding configurations. This $\mathbb{G}_a$-action on  $\mathcal{S}_l(\Gamma)$ is free and admits a slice $$\mathcal{S}^{\mathrm o}_l(\Gamma)=\left\{\mathfrak{s_l}=\sum_{v\in {\rm vert}\,(\Gamma^*)\,|\,l(v)=l} \mathfrak{s_l}(v)v\,|\,\sum_{v\in {\rm vert}\,(\Gamma^*)\,|\,l(v)=l} {\rm barycentre}\,(\mathfrak{s_l}(v))=0\right\}\,.$$ The smooth $\Aut^*(\Gamma)$-invariant affine variety
 $\mathcal{S}^{\mathrm o}(\Gamma):=\prod_{l=0}^{m-1} \mathcal{S}^{\mathrm o}_l(\Gamma)$ 
 is a slice of the free $\mathbb{G}_a^h$-action on $\mathcal{S}(\Gamma)$.

 Besides, there is an effective $\G_m$-action on  $\mathcal{S}(\Gamma)$  via the  simultaneous multiplication by $\lambda\in\mathbb{k}^*$ of the affine coordinates in all the corresponding configurations. This action leaves the slice $\mathcal{S}^{\mathrm o}(\Gamma)$ invariant.
 
  The action of the semi-direct product $$G(\Gamma)=\mathbb{G}_a^h\rtimes (\G_m\times\Aut^*(\Gamma))$$ on $\mathcal{S}(\Gamma)$ descends to an effective action of the reductive group $\G_m\times\Aut^*(\Gamma)$ on the smooth affine variety $\mathcal{S}(\Gamma)/\mathbb{G}_a^h\cong \mathcal{S}^{\mathrm o}(\Gamma)$. The quotient 
  $$\mathfrak{M}(\Gamma)=\mathcal{S}(\Gamma)/G(\Gamma)=\mathcal{S}^{\mathrm o}(\Gamma)/(\G_m\times\Aut^*(\Gamma))$$ is an affine variety with quotient singularities. 
\edefi

\bdefi[\emph{Configuration space of a graph divisor}]\label{def:config-graph}
Given a graph divisor $\cD=\sum_{i=1}^n \Gamma_ib_i$ we define the \emph{configuration space of $\cD$} to be the formal sum $$\mathcal{S}(\cD)=\sum_{i=1}^n  \mathcal{S}(\Gamma_i) b_i\,.$$ The underlying smooth affine variety $\prod_{i=1}^n\mathcal{S}(\Gamma_i)$ has dimension equal to the number of edges in $\cD$. 

Letting $\cD^*=\sum_{i=1}^n \Gamma_i^*b_i$ consider the group $\Aut^*_B(\cD)=\prod_{i=1}^n\Aut^*(\Gamma_i)$. It acts effectively on  $\mathcal{S}(\cD)$.  

Let $h_i=\h(\Gamma_i)$, and let $h(\cD)=\sum_{i=1}^n h_i$. The effective action of $\mathbb{G}_a^{h_i}\rtimes\G_m$ on $\mathcal{S}(\Gamma_i)$ as defined in \ref{def:config-tree} extends to an effective action of $\mathbb{G}_a^{h(\cD)}\rtimes\G_m$ on $\mathcal{S}(\cD)$. The action of $\mathbb{G}_a^{h(\cD)}$ is free and admits a slice
$$\mathcal{S}^{\mathrm o}(\cD)=\sum_{i=1}^n \mathcal{S}^{\mathrm o}(\Gamma_i) b_i\,.$$
The action of the affine algebraic group 
\be\la{eq:G(D)} G(\cD)=\mathbb{G}_a^{h(\cD)}\rtimes (\G_m\times\Aut^*_B(\cD))\ee 
on $\mathcal{S}(\cD)$ descends to an effective action of $\G_m\times\Aut^*_B(\cD)$ on the quotient $\mathcal{S}(\Gamma)/\mathbb{G}_a^{h(\cD)}\cong \mathcal{S}^{\mathrm o}(\cD)$. The quotient
\be\la{eq:moduli} 
\mathfrak{M}(\cD)=\mathcal{S}(\cD)/G(\cD)=\mathcal{S}^{\mathrm o}(\cD)/(\G_m\times\Aut_B^*(\cD))\ee is an affine variety with quotient singularities. 
\edefi 

\bdefi[\emph{Configuration invariant}]\label{def:config-invariant} 
Let $\pi_X\colon X\to B$  be a marked GDF surface  as in \ref{nota:nat-aff-coord} with the graph divisor $\cD=\cD(\pi_X)=\sum_{i=1}^n \Gamma_ib_i$ where $\Gamma_i=\Gamma_{b_i}(\pi_X)$. For a vertex $v\in {\rm vert}\,(\cD^*)$ the top level $l$ fiber component $F(v)\subset X_l$ is equipped with the natural affine coordinate $t_v$ as in \ref{nota:nat-aff-coord}. Let $C_l\subset X_l$ be the center of the fibered modification $\rho_{l+1}\colon X_{l+1}\to X_l$ in \eqref{eq: trivializing-seq-aff-modif}. We define the
\emph{configuration invariant} $\Delta(\pi_X)\in\mathcal{S}(\cD(\pi_X))$ by the formula
$$\Delta(\pi_X)(v)=F(v)\cap C_l\in \mathcal{S}(r_+(v)),\quad v\in {\rm vert}\,(\cD^*), \,\,\,l(v)=l\,.$$ 
If $X$ admits a $\mu_d$-action  along the fibers of $\pi_{X}$ then this action is well defined on $X_l$ for any $l=0,\ldots,m$ making  \eqref{eq: trivializing-seq-aff-modif} equivariant. So, both the collection of centers $(C_l)_{l=0,\ldots,m-1}$ and the collection of the affine coordinates $t_v$ are invariant under $\mu_d$. Hence the configuration invariant $\Delta(\pi_X)$ is as well. 
\edefi

\subsection{Versal deformation families of trivializing sequences}\la{ss:versal}

\begin{defi}\label{def:deform-family} Consider a marked GDF surface $\pi_X\colon X\to B$ with a marking $z\in\cO_B(B)\setminus\{0\}$ and a trivializing sequence \eqref{eq: trivializing-seq-aff-modif}. 
By a corresponding \emph{family of trivializing sequences} we mean a commutative diagram of varieties and morphisms
\be\la{diagr:versal}
\begin{diagram}[notextflow]
   \mathfrak{X}=\mathfrak{X}_m\,\,\,\,\,\,\,\,\,\,       & \rTo^{r_m}       &   \mathfrak{X}_{m-1}  &  \rTo<{}>{} & \ldots & \rTo<{}>{} & \mathfrak{X}_{1} & \rTo<{r_1}>{} & \mathfrak{X}_{0}  \\
    & \rdTo_{}      &        & \rdTo_{}  &  & \ldots &  & \rdTo_{} &  &  \rdTo_{} \\
\dTo^{}   &    & B \,\,\,    \dTo  &\rTo^{\id}  &  B & \rTo<{}>{} \ldots& \dTo^{}  &\rTo<{}>{} &    B   \dTo^{} & \rTo<{\id}>{} & B\\
  \mathcal{S}\   &   \rTo^{\id} &   \mathcal{S}         & \rTo<{}>{} & \ldots &\rTo<{}>{} & \mathcal{S} & \rTo<{\id}>{} & \mathcal{S}\\
\end{diagram}
\ee
where 
\begin{itemize}
\item $\mathfrak{X}_0=\mathcal{S}\times (B\times\A^1)$ with the standard projections to the factors $\mathcal{S}$ and $B$;
\item $\mathfrak{X}_l\to \mathcal{S}$ is a family of GDF surfaces for any $l=0,\ldots,m$;
\item there is a point $\mathfrak{s}_0\in \mathcal{S}$ such that the specialization of the upper line in \eqref{diagr:versal} over $\mathfrak{s}_0$ yields the initial trivializing sequence $(\pi_{X_l}\colon X_l\to B)_{l=0,\ldots,m}$.   
\end{itemize}

We say that \eqref{diagr:versal} is an \emph{affine deformation family of trivializing sequences} if, for any $l=0,\ldots,m$, $\mathfrak{X}_l\to \mathcal{S}$ is an affine deformation family of marked GDF surfaces. It is easily seen that \eqref{diagr:trivial} extends to a diagram of trivializing families of SNC completions which specializes to \eqref{eq: trivializing-seq-blowups} in each fiber, along with a  locally trivial family of extended divisors $(D_{\rm ext}(\mathfrak{s}))_{\mathfrak{s}\in\mathcal{S}}$. 

An affine deformation family \eqref{diagr:versal} is called \emph{trivial} if for any $l=1,\ldots,m$ there is a commutative diagram
\be\label{diagr:trivial}
\begin{diagram}[notextflow]
   \mathcal{S}\times X_l &    &\rTo^{\id\times\rho_l} &      &   \mathcal{S}\times X_{l-1}   \\
      & \rdTo_{\cong} &      &      & \vLine^{}& \rdTo_{\cong}  \\
\dTo^{} &    &   \mathfrak{X}_l   & \rTo^{r_l}  & \HonV   &    &  \mathfrak{X}_{l-1}  \\
      &    & \dTo^{}  &      & \dTo   \\
   \mathcal{S}  & \hLine & \VonH   & \rTo^{\rm id} &    \mathcal{S}   &    & \dTo_{} \\
      & \rdTo_{\id} &      &      &      & \rdTo_{\id}  \\
      &    &    \mathcal{S}   &      & \rTo^{\rm id}  &    &   \mathcal{S}  \\
\end{diagram}
\ee
where the morphisms in the upper square are defined over $B$.
\end{defi}

\begin{sit}[\emph{Deformation families of GDF surfaces over configuration spaces}]\label{lem:versal-fam-config} Given a triplet $(B,z,\cD)$ as in Definition \ref{def:coarse-moduli} there is a natural deformation family $$\mathfrak{F}(\cD)=(\mathfrak{X}_l(\cD)\to\mathcal{S}(\cD))_{l=0,\ldots,m} $$ of trivializing sequences
 \eqref{diagr:versal} of GDF surfaces of type $(B,z,\cD)$ over the configuration space 
$\mathcal{S}(\cD)$ 
 such that the associated family of graph divisors  is trivial. 
The construction of the latter family proceeds as follows.

We let $\mathfrak{X}_0(\cD)=\mathcal{S}(\cD)\times (B\times\A^1)$ with the canonical projections to the first and the second factors. Projecting $\mathcal{S}(\cD)$ to the zero level factor $$\mathcal{S}_0(\cD)=\sum_{i=1}^n \mathcal{S}(v_{0,i})v_{0,i}b_i$$ where $v_{0,i}$ is the root of $\Gamma_i$ yields a multisection of the first projection $\mathfrak{X}_0(\cD)\to\mathcal{S}(\cD)$. This multisection defines the center $ \mathfrak{C}_0(\cD)$ of the fibered modification $r_1(\cD)\colon \mathfrak{X}_1(\cD)\to\mathfrak{X}_0(\cD)$ fitting in diagram  \eqref{diagr:versal}. Projecting now to the 
first level factor  $$\mathcal{S}_1(\cD)=\sum_{i=1}^n\left( \sum_{v\in{\rm vert}\,(\Gamma^*_i)\,|\,l(v)=1}\mathcal{S}(v)v\right) b_i$$ yields the center  $\mathfrak{C}_1(\cD)$ of the fibered modification $r_2(\cD)\colon \mathfrak{X}_2(\cD)\to\mathfrak{X}_1(\cD)$ fitting in \eqref{diagr:versal}, etc. Continuing in this way one arrives finally at a deformation family $\mathfrak{F}(\cD)$ over $\mathcal{S}=\mathcal{S}(\cD)$ of trivializing sequences of marked GDF surfaces $(X_l(\mathfrak{s}))_{\mathfrak{s}\in\mathcal{S}, l=0,\ldots,m}$  fitting in \eqref{diagr:versal} with the marking $z$, the trivial family of graph divisors $\cD(\pi_{X(\mathfrak{s})})=\cD$, and the configuration invariant $\Delta(X(\mathfrak{s}))=\mathfrak{s}\in\mathcal{S}(\cD)$ where $X(\mathfrak{s})=X_m(\mathfrak{s})$.
Clearly, it admits an extension to a family of complete surfaces with a trivial family of extended divisors.
\end{sit}

Any morphism $f\colon\mathcal{S}\to \mathcal{S}(\cD)$ induces a family $\mathfrak{F}=f^*(\mathfrak{F}(\cD))$ of trivializing sequences over $\mathcal{S}$ with a trivial family of graph divisors. Conversely, any such family arises in this way, which shows the versality of $\mathfrak{F}(\cD)$. Namely, the following holds. 

\begin{prop}\label{lem:versality-config} 
Let $\mathfrak{F}=(\mathfrak{X}_l\to\mathcal{S})_{l=0,\ldots,m}$ be a family of trivializing sequences over the same base $\mathcal{S}$. Assume that the associated family of graph divisors over $\mathcal{S}$ is trivial:
$$\cD(\pi_{X(\mathfrak{s})})=\cD\quad\forall \mathfrak{s}\in \mathcal{S}\,.$$ Then one has $\mathfrak{F}=\Delta^*(\mathfrak{F}(\cD))$, that is, $\mathfrak{F}$ is induced from $\mathfrak{F}(\cD)$ via the morphism 
$$\Delta\colon \mathcal{S}\to \mathcal{S}(\cD),\quad \mathfrak{s}\mapsto \Delta(X(\mathfrak{s}))\,$$ defined by the configuration invariant.  Consequently, the deformation family of trivializing sequences $\mathfrak{F}(\cD)$ is  versal with respect to the \'etale topology.
\end{prop}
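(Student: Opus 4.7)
The plan is to establish the identification $\mathfrak{F}=\Delta^*\mathfrak{F}(\cD)$ by induction on the depth $l=0,1,\ldots,m$ of the trivializing sequence, after first verifying that $\Delta$ is a morphism of schemes (not merely a pointwise assignment). For the latter, I would inspect the recursive construction of Notation \ref{nota:nat-aff-coord}: starting from the natural coordinate $u$ on $\mathfrak{X}_0=\mathcal{S}\times B\times\A^1$, the coordinates $t_v$ on each family of top-level fiber components $F(v)\subset\mathfrak{X}_{l(v)}$ are produced inductively via \eqref{eq:coord} out of the positions of the points $\rho_{l+1}(F(w))\in F(v)$. Since the centers $\mathfrak{C}_l$ of the fibered modifications $r_{l+1}\colon\mathfrak{X}_{l+1}\to\mathfrak{X}_l$ are, by the definition of a deformation family of trivializing sequences, families of reduced zero-dimensional subschemes over $\mathcal{S}$, their intersections $F(v)\cap\mathfrak{C}_{l(v)}$ yield well-defined morphisms $\mathcal{S}\to\mathcal{S}(r_+(v))$; assembling these over all $v\in\mathrm{vert}(\cD^*)$ produces $\Delta$ as an algebraic morphism.

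For the induction, the base $l=0$ is tautological, as both $\mathfrak{X}_0$ and $\Delta^*\mathfrak{X}_0(\cD)$ equal $\mathcal{S}\times B\times\A^1$. For the inductive step, assuming an isomorphism $\Phi_{l-1}\colon\mathfrak{X}_{l-1}\stackrel{\cong_B}{\longrightarrow}\Delta^*\mathfrak{X}_{l-1}(\cD)$ over $\mathcal{S}$, I would observe that both $\mathfrak{X}_l$ and $\Delta^*\mathfrak{X}_l(\cD)$ arise from the same type of affine modification along the reduced divisor $z^*(0)$. By construction of the family $\mathfrak{F}(\cD)$ in Situation \ref{lem:versal-fam-config} together with the very definition of the configuration invariant $\Delta$, the image $\Phi_{l-1}(\mathfrak{C}_{l-1})$ coincides with the center of the modification producing $\Delta^*\mathfrak{X}_l(\cD)$; the lifting criterion Lemma I.1.5 then yields a canonical lift $\Phi_l\colon\mathfrak{X}_l\stackrel{\cong_B}{\longrightarrow}\Delta^*\mathfrak{X}_l(\cD)$, closing the induction.

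Versality in the \'etale topology follows immediately: any family $\mathfrak{F}$ with constant graph divisor $\cD$ is induced from the universal family $\mathfrak{F}(\cD)$ via the morphism $\Delta$. The restriction to the \'etale rather than the Zariski topology is necessary because the identifications $\Gamma_{b_i}(\pi_{X(\mathfrak{s})})\cong\Gamma_i$ implicit in the definition of $\Delta$ may carry non-trivial monodromy in $\Aut^*_B(\cD)$ permuting the vertices of the fiber trees; this monodromy is trivialized after pulling back along a suitable \'etale cover of $\mathcal{S}$, the existence of which is guaranteed by the local triviality hypothesis imposed on the family of extended divisors.

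The hard part will be the first step, namely a clean verification that $\Delta$ is a morphism of varieties rather than a set-theoretic map. This ultimately amounts to showing that the centers $F(v)\cap\mathfrak{C}_{l(v)}$, viewed through the recursively built coordinates $t_v$, depend algebraically on $\mathfrak{s}\in\mathcal{S}$; once this algebraicity is in hand, the inductive matching of centers is essentially formal via Lemma I.1.5, and the versality statement reduces to a bookkeeping exercise about trivializing the combinatorial monodromy of fiber-tree labelings.
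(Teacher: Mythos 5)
Your proposal is correct and follows essentially the same route as the paper's proof: induction on the level of the trivializing sequence, identifying the center $\mathfrak{C}_{l-1}$ with the pullback of $\mathfrak{C}_{l-1}(\cD)$ via the configuration invariant, lifting through the fibered modification (the paper cites \cite[Prop.\ 2.1]{KZ} where you invoke Lemma I.1.5, which serves the same purpose), and trivializing the finite monodromy of the family of graph divisors by an \'etale base change for the versality claim. Your additional care in checking that $\Delta$ is an algebraic morphism is a point the paper leaves implicit, but it does not change the structure of the argument.
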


\begin{proof}
We proceed by recursion on $m$. The assertion is evidently true if $m=0$. Suppose it holds for $m=l-1$, that is, the lower square in the following diagram  is commutative:
\be\label{diagr:induced}
 \bdi
 \mathfrak{X}_l& \rTo>{\phi_l} &  \mathfrak{X}_l(\cD)\\
\dTo<{r_l} &  & \dTo>{r_l(\cD)} \\
\mathfrak{X}_{l-1}&\rTo<{}>{\phi_{l-1}} &  \mathfrak{X}_{l-1}(\cD)\\
\dTo<{} &  & \dTo>{} \\
\mathcal{S}&\rTo^{\Delta}&  \mathcal{S}(\cD)\, 
\edi
\ee 
For $l=0,\ldots,m-1$ we let $\Delta_l\colon \mathcal{S}\to \mathcal{S}_l(\cD)$ be the composition of $\Delta$ with the projection to the level $l$ component $\mathcal{S}_l(\cD)$ of $ \mathcal{S}(\cD)$. The image of $\Delta_{l-1}$ can be seen as a multisection of $\mathfrak{X}_{l-1}\to\mathcal{S}$ which defines the center $\mathfrak{C}_{l-1}\subset\mathfrak{X}_{l-1}$ of the fibered modification 
$r_l\colon\mathfrak{X}_l\to\mathfrak{X}_{l-1}$. Due to diagram \eqref{diagr:induced} one has
$\mathfrak{C}_{l-1}=\phi_{l-1}^*(\mathfrak{C}_{l-1}(\cD))$. According to \cite[Prop.\ 2.1]{KZ}, $\phi_{l-1}$ admits a lift to a morphism $\phi_l\colon  \mathfrak{X}_l\to \mathfrak{X}_l(\cD)$ which makes the upper square in 
\eqref{diagr:induced} commutative. This gives the recursive step and proves the first assertion. 

To show the second one it suffices to notice that any family of graph divisors has a finite monodromy group. This monodromy becomes trivial after a suitable \'etale base change. According to the first part, the resulting family is induced from $\mathfrak{F}(\cD)$ via the morphism given by the configuration invariant. 
\end{proof}

Let us study the automorphism group of $\mathfrak{F}(\cD)$ over $B$.

\begin{lem}\label{lem:isomorphism} Consider two fibers $X_l=X_l(\mathfrak{s})$ and $X'_l=X_l(\mathfrak{s'})$ of $\mathfrak{F}_l(\cD)$ where $\mathfrak{s},\,\mathfrak{s'}\in\mathcal{S}(\cD)$, $l=0,\ldots,m$. Assume that there is an isomorphism $\phi\colon X=X_m\stackrel{\cong_B}{\longrightarrow} X'=X'_m$ which induces the identity on $\cD$. Then $\phi$ induces for any $i=1,\ldots,n$ and $l=0,\ldots,h_i-1=\h(\Gamma_i^*)$ an affine transformation
\be\la{eq:affine} \phi_{i,l}\colon t_v\mapsto \alpha t_v + \beta_{i,l},\quad \alpha_i\in\mathbb{k}^*,\,\,\,\beta_{i,l}\in\mathbb{k}
\ee
such that $\mathfrak{s'}(v)=\phi_{i,l}(\mathfrak{s}(v))$ for any $v\in {\rm vert}\,(\Gamma^*_i)$ with $l(v)=l$.
\elem

\bproof
Recall that $\phi$ can be extended first to an isomorphism  of pseudominimal completions 
$\bar\phi\colon\bar X\stackrel{\cong_{\bar B}}{\longrightarrow}\bar X'$, and then to  an isomorphism of the trivializing sequences  of completions \eqref{eq: trivializing-seq-blowups} which yields the identity on $\Gamma_i$ for any $i=1,\ldots,n$, see  the proof of Proposition I.8.3(c). By Corollary \ref{cor:jac} the associated birational transfomation $\psi$ of $B\times\A^1$ over $B$ fitting in \eqref{diagr: star} is biregular. Due to our assumption $\cO_B(B)^\times =\mathbb{k}^*$. Hence $\psi$ is of the form 
\be\la{eq:shear} \psi\colon (b,u)\mapsto (b,\alpha u+\beta)\quad\mbox{where}\quad \alpha\in \mathbb{k}^* \quad\mbox{and}\quad\beta\in\cO_B(B)\,.\ee 
In particular,  $\psi_*(z)=z$. Using $z$ as a local parameter in $B$ near $b_i$ one can write
\be\la{eq:develop} \beta(z)=\beta_{i,0}+\beta_{i,1} z +\ldots +\beta_{i,l}z^l+\ldots \quad\mbox{with}\quad \beta_{i,j}\in\mathbb{k}\,\,\,\forall i,j\,.\ee 
Consider the standard local charts $U(v)\subset X_l$ about $F(v)$ with local coordinates 
$(z,t_v)$ and  $U(w)\subset X_{l+1}$ about $F(w)$  with local coordinates $(z,t_w)$ and, respectively, $U'(v)\subset X'_l$ about $F'(v)$ with local coordinates 
$(z,t'_v)$ and  $U'(w)\subset X'_{l+1}$ about $F'(w)$  with local coordinates $(z,t'_w)$.
We claim that the restriction $\phi|_{U(v)}\colon U(v)\mapsto U'(v)$ is given by
\be\la{eq:loc-charts} (z,t'_v)=\psi_*(z,t_v)=(z,\alpha t_v + \beta_{i,l}+\beta_{i,l+1}z+\ldots)\,.\ee
Indeed,  for $l=0$, \eqref{eq:loc-charts} follows from \eqref{eq:shear} and \eqref{eq:develop}. 
Suppose by induction that \eqref{eq:loc-charts} holds for a given $l\le h_i-2$. Let $v, w\in {\rm vert}\,(\Gamma^*_i)$ with $l(v)=l$ and $l(w)=l+1$ be joint by an edge, and let $c(w)=\rho_{l+1}(F(w))\in F(v)$.
 Applying the inductive hypothesis one obtains 
\be\la{eq:base-ind} \phi_*\colon (z,t_v)\mapsto (z,\alpha t_v+\beta_{i,l}+\beta_{i,l+1}z+\beta_{i,l+2}z^2+\ldots),\,\,\,c(w)\mapsto\alpha c(w)+\beta_{i,l}\,.\ee  Using \eqref{eq:coord} and \eqref{eq:base-ind} one gets
$$\phi(z,t_w)=(z,\,(t_v-c(w))/z)\stackrel{\phi_*}{\longmapsto} (z,\,\alpha(t_v-c(w))/z+\beta_{i,l+1}+\beta_{i,l+2}z+\ldots)=(z,\,\alpha t_w+\beta_{i,l+1}+\beta_{i,l+2}z+\ldots)\,.$$ 
This gives the inductive step. Letting $z=0$ in \eqref{eq:base-ind} yields \eqref{eq:affine}.
\eproof

\begin{prop}\label{lem:univ-grp-action} \begin{itemize}\item[{\rm (a)}]
The action of the group $G(\cD)$ on $\mathcal{S}(\cD)$ defined in {\rm \ref{def:config-graph}}  admits a lift to an action of $G(\cD)$ on $\mathfrak{X}_l(\cD)$ over $B$ making  the following morphisms in \eqref{diagr:versal} $G(\cD)$-equivariant:
\be\la{eq:morphisms} \mathfrak{X}_l(\cD)\to\mathcal{S}(\cD), \,\,\,l=0,\ldots,m\quad\mbox{and}\quad r_{l}\colon\mathfrak{X}_{l}(\cD)\to\mathfrak{X}_{l-1}(\cD), \,\,\,l=1,\ldots,m\,.\ee 

\item[{\rm (b)}] Letting $\mathfrak{X}(\cD)=\mathfrak{X}_m(\cD)$ one has $\Aut_B(\mathfrak{X}(\cD)\to \mathcal{S}(\cD))=G(\cD)$. 

\item[{\rm (c)}] For $\mathfrak{s},\mathfrak{s}'\in \mathcal{S}(\cD)$ the surfaces $X(\mathfrak{s})$ and $X(\mathfrak{s}')$  are isomorphic over $B$ if an only if $\mathfrak{s}$ and $\mathfrak{s}'$ lie on the same $G(\cD)$-orbit.

\item[{\rm (d)}] The retraction  $\eta\colon \mathcal{S}(\cD)\to \mathcal{S}^{\mathrm o}(\cD)\cong \mathcal{S}(\cD)/\G_m^{h(\cD)}$, see Definition {\rm\ref{def:config-graph}}, defines a trivial $\G_m^{h(\cD)}$-bundle over $\mathcal{S}^{\mathrm o}(\cD)$. 

\item[{\rm (e)}] The family $\mathfrak{F}(\cD)=(\mathfrak{X}(\cD)\to\mathcal{S}(\cD))$  is isomorphic to the family induced from the restriction $\mathfrak{F}^{\mathrm o}(\cD)=(\mathfrak{X}^{\mathrm o}(\cD)\to\mathcal{S}^{\mathrm o}(\cD))$ via the retraction morphism $\eta$. Consequently,  the deformation family $\mathfrak{F}^{\mathrm o}(\cD)$ of GDF surfaces of type $(B,z,\cD)$ is versal  with respect to the \'etale topology.
\end{itemize}
\eprop

\bproof (a) We proceed by induction on $l$. The assertion is trivially true for $l=0$. Suppose it holds for some $l\in\{0,\ldots,m-1\}$. Then $g\in G(\cD)$ defines for any $\mathfrak{s}\in\mathcal{S}(\cD)$ an isomorphism $$g|_{X_l(\mathfrak{s})}\colon X_l(\mathfrak{s})\stackrel{\cong_B}{\longrightarrow} X_l(g(\mathfrak{s}))\,$$
satisfying \eqref{eq:affine}, see Lemma \ref{lem:isomorphism}.
The component $\mathfrak{s}_l\in \mathcal{S}_l(\cD)$ of $\mathfrak{s}$ is sent to the 
 component $g(\mathfrak{s})_l\in \mathcal{S}_l(\cD)$ of $g(\mathfrak{s})$. It follows that the center $\mathfrak{C}_l(\cD)\subset\mathfrak{X}_{l}(\cD)$ of the affine modification $r_l(\cD)\colon \mathfrak{X}_{l+1}(\cD)\to\mathfrak{X}_{l}(\cD)$ is $G(\cD)$-invariant (cf.\ \ref{lem:versal-fam-config}). Since its divisor $z^*(0)$ is $G(\cD)$-invariant too, due to Lemma I.1.5 the action of $G(\cD)$ on $\mathfrak{X}_{l}(\cD)$ admits a lift to $\mathfrak{X}_{l+1}(\cD)$ making the morphisms \eqref{eq:morphisms} $G(\cD)$-equivariant. 
 
 (b) By (a) one has $\Aut_B(\mathfrak{X}(\cD)\to \mathcal{S}(\cD))\supset G(\cD)$. Applying the same argument to $g\in \Aut_B(\mathfrak{X}(\cD)\to \mathcal{S}(\cD))$ one concludes that $g$
satisfies \eqref{eq:affine}, hence belongs to $G(\cD)$. 

(c) By virtue of (a),  if $\mathfrak{s}$ and $\mathfrak{s}'$ lie on the same $G(\cD)$-orbit then $X(\mathfrak{s})\cong_B X(\mathfrak{s}')$. Suppose further that $X(\mathfrak{s})\cong_B X(\mathfrak{s}')$. Composing this isomorphism, say, $\phi$ with a suitable $\alpha\in\Aut_B(\cD)$ acting on $\mathfrak{X}(\cD)$ one may assume that $\phi$ induces the  identity on $\cD$. Then by Lemma \ref{lem:isomorphism}, $\phi$ satisfies \eqref{eq:affine}, and so, extends to an element of $G(\cD)$ acting on $\mathfrak{X}(\cD)$. Since $\mathfrak{X}(\cD)\to \mathcal{S}(\cD)$ is $G(\cD)$-equivariant it follows that $\mathfrak{s}$ and $\mathfrak{s}'$ lie on the same $G(\cD)$-orbit.
\eproof

(d) The $\mathbb{G}_a^{h(\cD)}$-equivariant isomorphism 
$$ \Phi\colon \mathcal{S}^{\mathrm o}(\cD)\times \mathbb{G}_a^{h(\cD)}\stackrel{\cong}{\longrightarrow} \mathcal{S}(\cD),\quad (\mathfrak{s}, g)\mapsto g(\mathfrak{s})$$
 yields the desired equivariant trivialization. 
 
 (e) Let 
 $\eta^*(\mathfrak{F}^{\mathrm o}(\cD))=
 (\mathfrak{X}'(\cD))\to \mathcal{S}(\cD))$ be the induced family. Since $\mathcal{S}^{\mathrm o}(\cD)$ is a slice for the $\mathbb{G}_a^{h(\cD)}$-action on $\mathcal{S}(\cD)$ and the projection $\pi_{\mathfrak{X}(\cD)}\colon \mathfrak{X}(\cD)\to  \mathcal{S}(\cD)$ is $\mathbb{G}_a^{h(\cD)}$-equivariant, see (a), then also $\mathfrak{X}^{\mathrm o}(\cD)$ is a slice for the free $\mathbb{G}_a^{h(\cD)}$-action on $\mathfrak{X}(\cD)$, cf.\ (b). 
Therefore, one has a commutative diagram of $\mathbb{G}_a^{h(\cD)}$-equivariant morphisms 
\be
 \bdi\la{diagr:equivar}
\mathfrak{X}(\cD)^{\mathrm o}\times \mathbb{G}_a^{h(\cD)} &\rTo<{\widetilde\Phi}>{\cong} & \mathfrak{X}(\cD)\\
\dTo<{} &  & \dTo>{} \\
 \mathcal{S}^{\mathrm o}(\cD)\times \mathbb{G}_a^{h(\cD)}&\rTo>{\Phi}<{\cong} &  \mathcal{S}(\cD)\, 
\edi
\ee
where $\widetilde\Phi\colon (x, g)\mapsto g(x)$. Using Proposition \ref{lem:versality-config} and the latter diagram the assertions follow.

\brems\la{rem:interpret}
1.
The action of the unipotent radical $R_u(G(\cD))=\mathbb{G}_a^{h(\cD)}$ on 
$\mathfrak{X}(\cD)$ has the following interpretation. The group $\Aut_B(B\times\A^1)=\mathbb{U}_0\rtimes\mathbb{G}_m$ where $\mathbb{U}_0=\cO_B(B)$ acts naturally on $\mathcal{S}(\cD)$ via \eqref{eq:shear}. For $\mathfrak{s}\in\mathcal{S}(\cD)$ the subgroup $\mathbb{U}_{X(\mathfrak{s})}=\mathbb{U}_m$ from Proposition \ref{prop:aut-GDF} is the stabilizer of $\mathfrak{s}$ in $\mathbb{U}$. By Proposition \ref{prop:aut-GDF}(c) for any $\mathfrak{s}'\in \mathcal{S}(\cD)$ one has 
$$\mathbb{U}_{X(\mathfrak{s}')}=\mathbb{U}_{X(\mathfrak{s})}\cong H^0(B,-D_m)\quad\mbox{where}\quad D_m=\sum_{i=1}^n h_{i}b_i\,.$$  By Proposition \ref{prop:aut-GDF}(d), $\mathbb{U}_X=\mathbb{U}_{X(\mathfrak{s})}$ acts identically on $\mathcal{S}(\cD)$. The quotient ${\mathbb{U}}_0/{\mathbb{U}}_X\cong \mathbb{G}_a^{h(\cD)}$ acts transitively on each orbit of ${\mathbb{U}}_0$ in $\mathcal{S}(\cD)$, and this action coincides with the $\mathbb{G}_a^{h(\cD)}$-action.

2. It can be shown  that the quotient $\mathfrak{M}(\cD)=\mathcal{S}(\cD)/G(\cD)$ does not depend on the choice of a trivializing sequence for $\pi_X\colon X\to B$. Anyway, this fact follows also from the proof of Theorem \ref{thm:moduli}(a) below. 
\erems

\subsection{Proof of Theorem \ref{thm:moduli}}
(a) 
We claim that the desired coarse moduli space is $$\mathcal{C}(B,z, \cD)=\mathfrak{M}(\cD)=\mathcal{S}(\cD)/G(\cD)=\mathcal{S}^{\mathrm{o}}(\cD)/(\G_m\times\Aut_B^*(\cD))\,,$$ see \eqref{eq:moduli}. Indeed, consider a deformation family $\mathfrak{F}: (\eta\colon\mathfrak{X}\to\mathcal{S},\, \,\pi\colon\mathfrak{X}\to B)$   of marked GDF surfaces $\pi|_{X(\mathfrak{s})}\colon X(\mathfrak{s})=\eta^{-1}(\mathfrak{s})\to B$ sharing the common marking $z\in\cO_B(B)$ and the common graph divisor $\cD(\pi|_{X(\mathfrak{s})})\cong_B\cD$ where the latter isomorphism depends on $\mathfrak{s}\in\mathcal{S}$. 

 Passing to the Galois covering $\mathcal{S}'\to\mathcal{S}$ defined by the monodromy group of the latter family we obtain the induced family $\mathfrak{F}': (\mathfrak{X}'\to\mathcal{S}')$ with a trivial family of graph divisors. By Proposition \ref{lem:versality-config} the configuration invariant defines a morphism 
$\Delta\colon\mathcal{S}'\to\mathcal{S}(\cD)$ such that $\mathfrak{F}'=\Delta^*(\mathfrak{F}(\cD))$. The quotient morphism $\mathcal{S}'\to\mathcal{S}(\cD)/G(\cD)$ is constant on any fiber of $\mathcal{S}'\to\mathcal{S}$. Hence $\Delta$ can be factorized via 
a morphism $\delta\colon\mathcal{S}\to\mathcal{S}(\cD)/G(\cD)=\mathfrak{M}(\cD)$. Due to Proposition \ref{lem:univ-grp-action}(c) such a morphism $\delta$ is uniquely defined and has the desired properties, see Definition \ref{def:coarse-moduli}. 

Statement (b) follows from Theorem \ref{thm: main0}. 

Statement (c) is immediate by Proposition I.7.15.
Indeed, by virtue of this proposition, performing a top level stratching one replaces $\cD$ by $\cD^{(k)}$ without changing 
the isomorphism class over $B$ of the cylinder $X\times\A^1$. Since $X$ has a reducible fiber the graph divisor $\cD^{(k)}$ of the resulting marked GDF surface $X^{(k)}\to B$ with the same marking $z$ has at least $2k$ additional edges. So, one has $$\dim\mathfrak{M}(\cD)\ge {\rm card}\,({\rm edges}\,(\cD))-h(\cD)-1\quad\mbox{and}\quad 
\dim\mathfrak{M}(\cD^{(k)})\ge \dim\mathfrak{M}(\cD) +k\,,$$ which implies the assertion.
\qed

\bcor\la{cor:chain-div}
Any deformation family of GDF surfaces whose graph divisors are chain divisors is trivial. 
\ecor

\bproof Let a GDF surface $\pi_X\colon X\to B$ admits a line bundle structure, that is, $\cD(\pi_X)=\cD$ is a chain divisor. Then ${\rm card}\,({\rm edges}\,(\cD))=h(\cD)$, $\mathcal{S}(\cD)\cong\A^{h(\cD)}$,  and the $\G_a^{h(\cD)}$-action on $\mathcal{S}(\cD)$ is simply transitive. Hence $\mathcal{S}^{\mathrm o}(\cD)$  is a singleton, and so, the versal deformation family $\mathfrak{X}(\cD)\to\mathcal{S}(\cD)$ is trivial by Proposition \ref{lem:univ-grp-action}(d). Furthermore, any locally trivial family of chain divisors is trivial since the group $\Aut(\cD)$ is. 
Since any deformation family $\mathfrak{F}: \mathfrak{X}\to\mathcal{S}$ with the given graph divisor $\cD$ is induced from $\mathfrak{X}(\cD)\to\mathcal{S}(\cD)$ via the  morphism $\Delta:\mathcal{S}\to\mathcal{S}(\cD)$  given by the configuration invariant, see Proposition \ref{lem:versality-config}(e), $\mathfrak{F}$ is trivial as well. 
\eproof

\brem\la{rem:more-general} In fact, this corollary holds without any assumption on $B$. 
Theorem \ref{thm:moduli} remains valid if one replaces the assumption ``$\cO_B(B)^\times = \mathbb{k}^*$'' by the following one: ``$z^{-1}(0)$ is a singleton''. 
However, in general the coarse moduli space $\mathcal{C}_1(B,z, \cD)$ does not exist  in any reasonable category of spaces. Let us give a simple example.
\erem

\bexa\la{ex:non-algebraic} Let $\mathbb{k}=\mathbb{C}$, and let $B=\A^1\setminus\{0,\pi\}$, where $\A^1=\Spec \mathbb{C}[t]$, be equipped with the marking $z=t^2-1$ so that $n=2$ and $b_1=1,\,b_2=-1$.
Let also $\Gamma_i$,  $i=1,2$, be the rooted tree of height $1$ with two vertices on level $1$. One has $$\mathcal{S}(\Gamma_i)=\mathcal{S}(2),\quad\mathcal{S}^{\mathrm o}(\Gamma_i)\cong\A^1_*:=\A^1\setminus\{0\},\,\,  i=1,2,\quad\mbox{and}\quad\mathcal{S}^{\mathrm o}(\cD)\cong (\A^1_*)^2\,.$$ The group $\Aut^*(\cD)$ is trivial, see Definition \ref{def:config-graph}. The infinite discrete group $$\cO_B(B)^\times/\mathbb{k}^*\cong\ZZ^2\,$$ acts  naturally on the quotient $$\mathcal{S}^{\mathrm o}(\cD)/(\G_m\times\Aut^*(\cD))\cong (\A^1_*)^2/\G_m\cong\A^1_*$$ as a subgroup of $\G_m\subset\Aut(\A^1_*)$ generated by two nonzero complex numbers whose ratio is transcendental. Its orbits correspond to the isomorphism classes of the associated GDF surfaces.
It is easily seen that the induced complex topology of
the quotient $\A^1_*/\ZZ^2$ does not satisfy the Kolmogorov $T_0$ axiom. Hence this quotient neither is an algebraic (or complex) space, nor is an algebraic  stack. 
\eexa

\end{document}